\def\N{{\mathbb N}} 
\def\R{{\mathbb R}} 
\newtheorem{theorem}{Theorem}[section]
\newtheorem{definition}[theorem]{Definition}
\newtheorem{proposition}[theorem]{Proposition}
\newtheorem{remark}[theorem]{Remark}
\def\ds{\displaystyle}
\def\grad{\nabla}
\def\E{\mathbf{E}}
\newcommand{\pd}[2]{\frac{\partial #1}{\partial #2}}
\def\Pr{{\mathrm P}} 
\def\P{{\mathbb P}}
\def\H{{\mathcal H}}
\def\Dom{\mathrm{Dom}}
\newcommand{\tr}[1]{\mathrm{tr}(#1)}
\def\dist{\mathrm{dist}}
\title{The Existence of Anomalous Dissipation over Bounded Interior Domains}
\author{Ethan Dudley\footnotemark[2]\; and Konstantina Trivisa\footnotemark[2]\;\footnotemark[1]}
\begin{document}
\maketitle
\renewcommand{\thefootnote}{\fnsymbol{footnote}}
\footnotetext[2]{ Mathematics Department, University of Maryland College Park, Maryland, United States\newline}
\footnotetext[1]{ Institute for Physical Science and Technology, University of Maryland College Park, Maryland, United States\newline
Correspondence may be sent to edudley1@terpmail.umd.edu}
\begin{abstract}
   A prevalent feature of three-dimensional turbulence is the presence of anomalous dissipation, or that the mean rate of energy dissipation is bounded below by a positive number in the inviscid limit. This is thought to be due to the nonlinear convection term in the Navier Stokes equations stretching vortex tubules and thereby increasing the amount of small scale oscillations within the flow. In this paper, we construct an example of a linear Stokes flow within a sphere that exhibits anomalous dissipation. 
\end{abstract}

\tableofcontents
\section{Introduction}

When considering the inviscid limit problem for the Navier Stokes equations, an important consideration one has to account for is the underlying energy budget of the problem. The Navier Stokes equations are a dissipative system, while the Euler equations (the weak inviscid limit of the Navier Stokes equations) are formally conservative. However, despite this numerous physical experiments \cites{touil2002decay, pearson2002measurements,  sreenivasan1984scaling, frisch1995turbulence} show that as the viscosity vanishes a certain amount of mean viscous dissipation remains. It is commonly thought that this behavior is due to the convective acceleration term (i.e. $u^\nu \cdot \grad u^\nu$) stretches the vortex tubules within the flow  which in turn increases the total vorticity (and small scale oscillations) within the flow \cite{onsager1949statistical}. 

If $\nu > 0$ is the viscosity and $u^\nu$ is the associated viscous fluid velocity over a domain $D \subset \R^3$, then we traditionally say that the flow exhibits anomalous dissipation if there exists $\varepsilon_0 > 0$ such that
\begin{equation}\label{eqn: anomalous dissipation}  
    \limsup_{\nu \to 0}\nu\E\int_0^t\|\grad u^\nu\|_{L^2(D)}^2 = \varepsilon_0 > 0.
\end{equation}
This definition of (global) anomalous dissipation \cite{frisch1995turbulence} can be readily extended to a local type definition over compact subsets $K \subset D$ and is easily computable within a numerical simulation meaning it is used extensively within the literature. An interesting mathematical question, is how much regularity does $u^\nu$ need to retain in its inviscid limit for \eqref{eqn: anomalous dissipation} to hold. Onsager conjectured that if $u^\nu \in [C(0,T, C^{\gamma}(D))]^3$ with $\gamma \leq \frac{1}{3}$ then $u^\nu$ exhibits anomalous dissipation. Onsager's conjecture is supported by the fact that there exist Euler flows over unbounded domains with Holder regularity $\gamma \leq \frac{1}{3}$ which are dissipative \cite{isett2018proof} and even in the larger space of Besov regular solutions \cite{novack2023intermittent}. Moreover, recently Armstrong and Vicol \cite{armstrong2023anomalous} were able to construct a highly oscillatory flow field by gluing together shear flows together at infinitely many length scales such that a passive scalar subject to this flow field exhibits anomalous dissipation. Based on this approach, Brue and de Lellis \cite{brue2023anomalous} were able to construct 2.5 dimensional solutions (when $u^\nu$ is a 3 dimensional vector field but only depends on 2 spatial input variables) to the Navier Stokes equations which exhibit \eqref{eqn: anomalous dissipation} by setting the $z$-velocity to be a passive scalar type quantity. Despite this great progress, the case of a fully three-dimensional analytical velocity field to the Navier Stokes equations which exhibits \eqref{eqn: anomalous dissipation} remains open to the authors knowledge.

In this work, we wish to address how the presence of a boundary affects the existence of anomalous dissipation. Typically, the presence of a boundary makes most classical techniques harder to use, for instance Calderon-Zygmund estimates for the pressure fail to hold due to the boundary, and in the inviscid limit a boundary layer will form \cites{prandtl1928motion, sammartino1998zero1, kato1972nonstationary}. In the case of a no-slip boundary condition, Kato was able to show that for a highly regular domain and $[C^1((0,T) \times D)]^3$ solutions then there is \textit{no} anomalous dissipation if and only if the amount of vorticity within a thin region next to the boundary vanishes \cite{kato1972nonstationary}. Kato's proof uses a boundary correction type approach and there have been many variants of Kato's criterion for the lack of anomalous dissipation. In particular, we refer to the result by Drivas and Nguyen \cite{drivas2018onsager} which uses a mollifier type approach and breaks the flow into the bulk and boundary components to show the regularity assumptions required on each part. In that work the authors were able to show that if the bulk flow is uniformly $[L^3(0,T, B_{3, c_0}^\sigma(D)]^3$ where $\sigma \in [\frac{1}{3},1]$ and the boundary flow is $[L^\infty((0,T) \times D)]^3$ then there is no anomalous dissipation. However, the authors in \cite{drivas2018onsager} work with an unforced flow which would almost never appear in nature. Motivated in part by their work, we work with the Navier-slip condition instead of the no-slip condition and we will study flows which are only $L^2((0,T) \cap H^s(D))$ near the boundary with $s \in (0, \frac{1}{2})$. This is substantially less control over how the flows behave at the boundary and we will show that we get a non-zero limit in \eqref{eqn: anomalous dissipation} which is the opposite result from \cite{drivas2018onsager}. Moreover our example should be able to extended to show an non-vanishing limit over any compactly supported subset in the domain by taking a properly weighted sum of external forces which concentrate energy on concentric shells with radii from a countable dense set in $\R_+$ which shows that one sufficient condition for \eqref{eqn: anomalous dissipation} is to concentrate energy over sets of Lebesgue measure 0 within the domain. Furthermore, contrary to most comments within articles for unbounded domains, our example uses a fixed external forcing which is not highly irregular (it is in fact $C^\infty$ within the interior of the domain) but rather just concentrates energy at the boundary. This is highly analogous to how Stokes 2nd problem which is not ``anomalous" which shows that \eqref{eqn: anomalous dissipation} is not a good way of measuring the contribution of the nonlinear term to the acceleration of viscous dissipation in the inviscid limit.

Results of this kind are known already in the context of stationary solutions to the stochastically forced Navier Stokes equations over unbounded domains. For instance, Bedrossian et.al. \cite{bedrossian2019sufficient} showed that \eqref{eqn: anomalous dissipation} is not a good definition for anomalous dissipation in the case of statistically stationary solutions, because stationary solutions to the stochastic heat equation subject to a zero drift, white-in-time colored-in-space noise (which is independent of $\nu$) will satisfy the non-vanishing viscous dissipation assumption due to its energy balance:
\[
    \begin{cases}
        dw = \nu \Delta w dt + gdW_t\\
        w(0) = w_0
    \end{cases} \Rightarrow \nu\E\int_0^t\|\grad w\|_{L^2(D)}^2 = \frac{1}{2}\E\int_0^t\|g\|_{L^2(D)}^2 \neq 0.
\]
As such in the case of stochastic flows, authors have tried finding alternative definitions for anomalous dissipation which are directly related to how the convection term contributes to the energy balance. See \cites{bedrossian2019sufficient, papathanasiou2021sufficient, dudley2024necessary, drivas2022self}. However to the authors knowledge, the example we outline here is the first such example in the for non-statistically stationary solutions. 
In particular, we show that in the presence of a boundary, we can build non-statistically stationary solutions to the linear Stokes equation which still satisfy \eqref{eqn: anomalous dissipation}. This analysis also works for the linear heat equation, but we will work with the Stokes operator as it is closely tied to the Navier Stokes equation which is the motivating system for this problem. Heuristically, one can argue that our approach still increases the total vorticity within the flow --- without any vortex stretching --- as the boundary is a natural source for generating vorticity within the flow.

\subsection{The Physical Implications of Including a Boundary}
In this work, we take the domain $D = B(0, R) \subset \R^3$ where $R \in (0, \infty)$ is a fixed value. We will further assume that this boundary is impermeable to the fluid, but the total shear stress induced on the fluid is proportional to the momentum of the fluid at the wall. Consider the Stokes problem associated with these boundary conditions:
\begin{equation}\label{eqn: Stokes w/ slip}
    \begin{cases}
        dz^\nu = \big(\nu \Delta z^\nu  + \grad p^\nu + f\big)dt + gdW_t & x \in D,\; t > 0\\
        \grad \cdot z^\nu = 0 & x \in D,\; t > 0\\
        z^\nu \cdot n^{\partial D} = 0 & x \in \partial D,\; t > 0\\
        n^{\partial D} \cdot \grad z_\tau^\nu + \alpha z_\tau^\nu = 0 &x \in \partial D,\; t > 0\\
        z^\nu(0) = z_0 & x \in D.
    \end{cases}
\end{equation}
Here $\alpha \in L^\infty(\partial D)$ is called the slip length, $n^{\partial D}$ is the outward unit normal vector to the boundary $\partial D$, and $z_\tau^\nu|_{\partial D} = z^\nu|_{\partial D} - (n^{\partial D} \cdot z^\nu)n^{\partial D}|_{\partial D}$ is the trace of $z^\nu$ in the tangential direction along the boundary. 

The slip-type boundary condition proposed here was first considered by Navier \cite{navier1822memoire}. Here $\alpha \geq 0$ is a ``constant" of proportionality that balances the friction the fluid experiences along the boundary with the acceleration due to the pressure gradient. Alternatively, one can consider the slip condition as a growth rate for the tangential component of the vorticity generated at the wall (after accounting for the curvature of the wall as well) \cite{chen2010study}. In particular, this friction-type boundary condition has been derived from the kinetic theory of homogenization in \cites{bardos2006half, coron1988classification} and has been justified as the effective boundary condition for flows over rough boundary \cites{gerard2010relevance, mikelic2000interface}. Furthermore, experimental evidence suggests that at sufficiently large Reynolds numbers or in domains with curvature, the no-slip boundary condition fails to capture important information about the flow \cites{zhu2002limits, einzel1990boundary, jager2003couette}. As such, the study of the Navier Stokes equations with Navier Slip boundary condition has become more prevalent within the literature, see \cites{chen2010study, neustupa2018regularity, xiao2007vanishing} and the references within for a sample. Here we take a more simplified model \eqref{eqn: Stokes w/ slip} which linearize the problem and directly related to the Navier Stokes system, see Section \ref{sec: martingale solutions} for more information. 

Another important consideration for why the slip boundary was chosen is that if $\nu$ is fixed, then as $\alpha \to \infty$ one can recover the non-slip condition and as $\alpha \to 0$ we recover the full slip (also known as the zero-flux) boundary condition; marking the Navier-slip condition as a generalization of the other types of physical boundary conditions typically encountered in experiments \cite{chen2010study}. In this work, the Navier-slip condition is used not only to account for physical relevance but also as a measure to ensure the kinetic energy at the boundary is non-zero. As we will show in the following sections, lacking control over the kinetic energy at the boundary is a sufficient condition to show the existence of anomalous dissipation over the entire domain even if there is no convective term to mix length scales together. 

\subsection{Outline for the Article}
First in Section \ref{sec: martingale solutions} we review the existance of solutions to the Stokes problem and how one can construct solutions to the Navier Stokes equations using those solutions to the Stokes problem. Then in Section \ref{sec: exist anomalous dissipation} we choose our forcing such that our family of non-statistically stationary solutions to the Stokes problems satisfy \eqref{eqn: anomalous dissipation}. Finally, in Section \ref{sec: numerical simulations} we numerically simulation our example using finite differences to illustrate how the family of constructed solutions from the previous section behave in both the stochastic and deterministic setting.  

\subsection{Notation}
Let $X$ be any Banach space containing scalar fields $h:\R^d \to \R$ subject to some requirement. We will denote the vector space equivalent as $[X]^d = \{h \mid h_i \in X, \; i \in \{1,2,\dots, d \}\}$ for $d = 2,3$. For $x \in \R^d$ and $R > 0$ we write $B(x,R) \subset \R^d$ for the ball centered at $x$ with a radius of $R$. Let $C_c^\infty(D)$ be the space of continuously differentiable functions compactly supported within $D$. And when $D$ has a boundary, we will use $\H^{d-1}$ to express the surface measure of $\partial D$.

For $s \in (0,1)$ and define the following spaces
\begin{align*}
    [L_\sigma^2(D)]^d &= \{f \in [L^2(D)]^d \mid \grad \cdot f = 0\}\\
    [H_{\sigma, \tau}^1(D)]^d &= \{f \in [H^1(D)]^d \mid \grad \cdot f = 0,\quad n^{\partial D}\cdot \grad f_\tau + \alpha f_\tau = 0|_{\partial D}\}\\
    \dot{H}^s(D) &= \{f \in L^2(D) \mid \|f\|_{\dot{H}^s}^2 := \int_D \int_D \frac{|f(x) - f(y)|^2}{|x-y|^{3+2s}}\;dxdy < \infty\}.
\end{align*}
Each space will be equipped with its induced (semi-) norm. Without loss of generality, the (semi-) norm over $X$ and $[X]^3$ will be denoted by $\|\cdot\|_X$ for simplicity. Here $f_\tau$ is the tangential component of $f$ to the boundary and $n^{\partial D} \cdot \grad f_\tau + \alpha f_\tau = 0|_{\partial D}$ holds in the sense of distributions.

For convenience we will use the Einstein summation convention: $a_jb_j = \sum_{j=1}^da_jb_j$. 
Moreover, for two given rank-two tensors $A$ and $B$ we define the Frobenius product and norm as $A:B = A_{ij}B_{ij} = \sum_{i}\sum_j A_{ij}B_{ij}$ and $|A| = \sqrt{A:A}$.

Finally, we will commonly use $C$ to denote a constant independent of $\nu, z^\nu$, but may depend on $s, D, \partial D$ or $T$. Furthermore, without loss of generality, we will use $C$ repeatedly across inequalities even if the constant of the coefficient increases/decreases. 

\section{Existence of Solutions}\label{sec: martingale solutions}
In order to construct our family of solution processes $\{z^\nu\}_{\nu > 0}$ which satisfy both the linear Stokes problem as well as \eqref{eqn: anomalous dissipation} we will concentrate the external forcing near the boundary. This has the affect of speeding up the fluid velocity at the wall causing larger shearing affects (due to the slip boundary condition) which then moves into the interior as small scale oscillations.  Before getting into the specific details to ensure the family of solutions exhibit anomalous dissipation, we first recall a few results about the linearized Stokes problem.

\subsection{The Stokes Operator with Slip conditions}\label{subsec: Stokes operator} 
To construct solutions $z^\nu$ to \eqref{eqn: Stokes w/ slip} we will use a semigroup method using the techniques of Da Prato \cite{da2014stochastic}. \newline

Let $\P:L^2(D) \to L_\sigma^2(D)$ be the Leray projection and $\Delta_{NS}:H^2(D) \cap H_{\tau, \sigma}^1(D) \to L_\tau^2(D)$ be the Laplacian subject to the Navier-slip boundary condition:
\begin{align*}
    \begin{cases}
        \Delta_{NS}\phi = 0 & x \in D\\
        n^{\partial D}\cdot \grad \phi_\tau + \alpha \phi_\tau = 0 & x \in \partial D
    \end{cases}
\end{align*}
The Stokes operator with Navier-slip conditions is defined as $A_{NS} = \Pr(-\Delta_{NS})$. It is known that $A_{NS}$ is a positive, self-adjoint, unbounded, linear operator, and with domain $\Dom(A_{NS}) = H^2(D) \cap H_{\tau, \sigma}^1(D)$. Furthermore $A_{NS}$ has compact resolvent and by the Spectral Theorem the eigenvalues of $A_{NS}$ are such that $0 < \lambda_1 \leq \lambda_2 \leq \dots$ and the associated eigenfunctions $\{q_j\}_{j\in \N}$ form an orthonormal basis in $L_{\sigma, \tau}^2(D)$ \cites{chen2010study, Amrouche2021}. Then the noise can be represented in $L^\infty(0,T, L^2(D))$ $\P$-a.s. as 
\[
    gdW_t = \sum_{j=1}^\infty \langle g, q_j\rangle_{L^2(D)}q_jd\beta_j(t)
\]
with the cross-variation $\int_0^T[gdW_t, gdW_t] = \int_0^T\|g\|_{L^2(D)}^2 < \infty$. Here $\langle \cdot , \cdot \rangle_{L^2(D)}$ is the $[L^2(D)]^3$ inner product, and $\{\beta_j\}_{j \in \N}$ is a family of i.i.d. 1-dimensional Standard Brownian motions. See \cite{da2014stochastic} for more information regarding the construction of the noise. Also using integration by parts we can define
\[
    \|A_{NS}^{1/2}h\|_{L^2(D)}^2 := \|\grad h\|_{L^2(D)}^2 + \|\sqrt{\alpha}\;\tr{h}\|_{L^2(\partial D)}^2
\]
for all $h \in \Dom(A_{NS}^{1/2}) \subset H^1(D)$, where $\mathrm{tr}: H^1(D) \to L^2(\partial D)$ is the trace operator. 

Now we can recast \eqref{eqn: Stokes w/ slip} using the Stokes operator $A_{NS}$ as a general linear parabolic problem: 
\begin{equation}\label{eqn: general linear stokes problem}
    \begin{cases}
        dz^\nu = \big(-\nu A_{NS}z^\nu  + f\big)dt + gdW_t & x \in D,\; t > 0\\
        z^\nu(0) = z_0 & x \in D.
    \end{cases}
\end{equation}
It is known that $-A_{NS}$ generates an analytic semigroup provided $D$ is $C^{1,1}$\cite{ter2020holder}. As such define:
\[
    z^\nu(t) = e^{-\nu tA_{NS}}z_0 + \int_0^t e^{-\nu(t-s)A_{NS}}f\;ds + \int_0^t e^{-\nu(t-s)A_{NS}}g\;dW_s. 
\]
Then $z^\nu$ is the unique weak solution to \eqref{eqn: general linear stokes problem} \cite{da2014stochastic}, and by both Ito's formula and Young's inquality $z^\nu$ satisfies the following energy inequality
\begin{align}\label{eqn: Stokes energy uniform bounds}
    \frac{1}{2}\E\big(\sup_{t \in [0,T]}\|z^\nu(t)\|_{L^2(D)}^2\big) +  &2\nu\E\int_0^T\|A_{NS}^{1/2}z^\nu\|_{L^2(D)}^2\\
    &\leq \E\|z_0\|_{L^2(D)}^2 + \E\int_0^T\|f\|_{L^2(D)}^2 + \E\int_0^T\|g\|_{L^2(D)}^2.\nonumber
\end{align}
Thus the trajectories of $z^\nu$ are in $[L^\infty(0,T, L_\sigma^2(D))]^3 \cap [L^2(0,T, H_{\sigma, \tau}^1(D))]^3$ $\P$-a.s. when $\nu > 0$. It is possible to show higher classes of regularity by appealing to the analyticity of the semigroup operator, but such results are unneeded in this work. 

An important class of solutions are known as \textit{Statistically stationary} solutions:
\begin{definition}
    We say that $z^\nu$ is statistically stationary, if $z^\nu$ is the unique weak solution to \eqref{eqn: general linear stokes problem} and if for all $t > 0$
    $z^\nu(\cdot + t) = z^\nu(\cdot)$ in law. 
\end{definition}
\begin{remark}
    An important aspect about statistically stationary solutions is that on average the mean kinetic energy remains constant. This means that $\E\|z^\nu(t)\|_{L^2(D)}^2 = \E\|z_0\|_{L^2(D)}^2$.
\end{remark}

\subsection{The Nonlinear Problem}\label{subsec: the nonlinear problem}
If we want to construct solutions to the Navier Stokes equation, note that we can do so by including a random differential equation which accounts for how the convection term mixes the external forcing across length scales. To do this, we say that for each fixed $\omega \in \Omega$ $z^\nu(\omega)$ is now a deterministic solution to the linear Stokes problem and we let $v^\nu$ be a weak solution to 
\begin{equation}\label{eqn: nonlinear part}
    \begin{cases}
        dv^\nu = \big(-\nu \Pr(-\Delta_{NS})v^\nu - \Pr((v^\nu + z^\nu(\omega) \cdot \grad (v^\nu + z^\nu(\omega))\big)dt & x \in D,\; t > 0\\
        \grad \cdot v^\nu = 0& x \in D,\; t > 0\\
        n^{\partial D} \cdot v^\nu = 0& x \in \partial D,\; t > 0\\
         n^{\partial D} \cdot \grad v_\tau^\nu + \alpha v_\tau^\nu = 0& x \in \partial D,\; t > 0\\
        v^\nu(0) = u_0 & x \in D.
    \end{cases}
\end{equation}
Showing the existence of $v^\nu$ which is a weak solution to \eqref{eqn: nonlinear part} is straight forward to establish; one can use the eigenfunctions of the Stokes operator $A_{NS}$ in a Faedo-Galerkin scheme similar to the method used in \cite{flandoli1995martingale}.
We remark that \eqref{eqn: nonlinear part} is entirely deterministic (for fixed $\omega$) and if we let $u^\nu(\omega) = v^\nu + z^\nu(\omega)$ then $u^\nu(\omega)$ is a weak solution to the incompressible Navier Stokes equations subject to the Navier slip condition. However, this holds for each fixed realization, in order to generalize this for all possible realizations --- if we want $u^\nu$ to $\P$-a.s. solve the incompressible Navier Stokes equations--- we can still establish existence of solutions but the stochastic basis associated to the solution will not be known apriori \cite{flandoli1995martingale}. 

\begin{remark}
    Here the convection term $u^\nu \cdot \grad u^\nu$ shows the mixing of the (dissipative) linear problem across different length scales within the interior of the domain. 
\end{remark}
\begin{remark}
    The property that $v^\nu$ is a suitable weak solution to \eqref{eqn: nonlinear part} so that $u^\nu = v^\nu + z^\nu$ is a weak solution to Navier Stokes equations seems to depend on the solution $z^\nu$ to the linear problem. However this is not true. See Theorem 2.8 in \cite{romito2010existence} for the proof of such a result. As such when $g \equiv 0$ there is no difference between suitable weak solutions in the classical deterministic sense of Caffarelli, Kohn and Nirenberg \cite{caffarelli1982partial} and the stochastic case.
\end{remark}
\begin{remark}\label{remark: navier stokes anomalous dissipation}
    In the next subsection we choose the external forcing $f,g$ which are concentrated at the boundary to ensure that 
    \[
        0 < \varepsilon_0 \leq \lim_{\nu \to 0}\nu\E\int_0^t\|\grad z^\nu\|_{L^2(D)}^2.
    \]
    This is done through the use of the trace theorem to bound the viscous dissipation below by the behavior of the flow at the wall. Hence if $v^\nu$ is compactly supported within the bulk/interior of the domain then a similar argument shows that $u^\nu$ will also exhibit anomalous dissipation. We note that while we were unable to show that $v^\nu$ is compactly supported within the interior of the fluid, and thus cannot extend our result to show the existence of solutions to the Navier Stokes problem which satisfy \eqref{eqn: anomalous dissipation}, this kind of assumption is well supported by experimental evidence \cite{pirozzoli2010dynamical} and the references within.
\end{remark}

\section{Existence of (Global) Anomalous Dissipation}\label{sec: exist anomalous dissipation}
Now we will choose specific choices for $f,g$ so that the family of (viscous) weak solutions $\{z^\nu\}$ to the Stokes problem \eqref{eqn: general linear stokes problem} satisfy \eqref{eqn: anomalous dissipation} by blowing up at the boundary. While it may be possible to generalize the arguments used here to arbitrary $C^{1,1}$ or even Lipschitz domains, for clarity we will restrict ourselves to the case when $D = B(0, R) \subset \R^3$ to make the computation along the boundary easier to work with. 
Furthermore, we expect a similar behavior to be true for the full Navier Stokes solution $\{u^\nu\}$, however our approach is not immediately amendable to this question due to possible contributions $v^\nu$ may make at the boundary. See \ref{remark: navier stokes anomalous dissipation}. 

Before getting into all of the details, let us naively sketch out our approach.
\begin{remark}\label{remark: Naive approach}
    Let $0 < \varepsilon \ll 1$. Since $D \in C^1$ there exists a trace operator $\gamma: H^{\frac{1}{2} + \varepsilon}(D) \to L^2(\partial D)$ \cite{schneider2010trace}. Moreover $H^{\frac{1}{2} + \varepsilon}(D)$ is an interpolation space for $L^2(D)$ and $H^1(D)$. Thus by Holder's inequality 
    \begin{align*}
         \nu^{\frac{1}{2} + \varepsilon}\E\int_0^T\|z^\nu\|_{L^2(\partial D)}^2 &\leq \nu^{\frac{1}{2} + \varepsilon}C\E\int_0^T\|z^\nu\|_{H^{\frac{1}{2} + \varepsilon}(D)}^2\\
         &\leq C\Big(\E\int_0^T\|z^\nu\|_{L^2(D)}^2\Big)^{1/2 - \varepsilon}\Big(\nu\E\int_0^T\|z^\nu\|_{H^1(D)}^2\Big)^{1/2 + \varepsilon}.
    \end{align*}
    Recall that when $f,g \in [L^2(D)]^3$ then the kinetic energy within the interior of the domain is uniformly bounded due to \eqref{eqn: Stokes energy uniform bounds}. Hence if $z^\nu$ blows up at the boundary then its $H^1$ norm also blows up at the same rate. Note that there is no contradiction in requiring $z^\nu$ to have a uniform bound on its bulk kinetic energy (i.e. kinetic energy over the interior) and requiring the kinetic energy at the boundary to blow up since the boundary $\partial D$ is a set of 3-dimensional Lebesgue measure 0.
\end{remark}

While this naive approach seems great at first, its impossible to construct a solution $z^\nu$ which blows up faster than $\nu^{\frac{1}{2}}$ at the boundary.
\begin{remark}\label{remark: boundary convergence}
   Suppose $a \in (\frac{1}{2}, 1)$. Now select $\varepsilon = \frac{2a - 1}{4} \in (0,\frac{1}{4})$. Then the linear trace operator $\gamma: H^{\frac{1}{2}+\varepsilon}(D) \to L^2(\partial D)$ is uniformly bounded and by the same argument as in Remark \ref{remark: Naive approach}
    \[
        \nu^a\E\int_0^T\|z^\nu\|_{L^2(\partial D)}^2\;dt \leq  C \nu^{a - \varepsilon - \frac{1}{2}}(\E\int_0^T\|z^\nu\|_{L^2(D)}^2)^{\frac{1}{2} - \varepsilon}(\nu\E\int_0^T\|z^\nu\|_{H^1(D)}^2)^{\frac{1}{2} + \varepsilon} \leq C\nu^{a-\varepsilon - \frac{1}{2}}.
    \]
    Here the last inequality comes from the uniform bounds on the mean kinetic energy and viscous dissipation of $z^\nu$ from \eqref{eqn: Stokes energy uniform bounds}.
    Hence the right hand side vanishes as $\nu \to 0$. As such our naive approach to the problem which we outlined in Remark \ref{remark: Naive approach} needs to be adjusted slightly.
\end{remark}

Since the Stokes problem \ref{eqn: Stokes w/ slip} is linear, to examine how $z^\nu$ behaves at the wall it is enough to study how the noise/ external forcing interacts with the wall to introduce vorticity into the fluid. We now explicitly construct an example of external forcing which is in $[L_\sigma^2(D)]^3$ and blows up along the boundary when convolved with a Gaussian kernel. 
\begin{proposition}\label{proposition: choice of forcing}
    Let $\delta \in (0,1)$, $R > 0$, $D = B(0,R)$, and $\{\mathbf{e}_j\}_{j=1}^3$ be the standard basis in $\R^3$. Let $\dist(x, A) = \ds\inf_{y \in A}\|x-y\|_{\ell^2}$ be the Euclidean distance to set the $A$. 
    Define
    \[
        g(x) := \frac{1}{\dist(x, \partial D)^{\delta/2}}\frac{-\sqrt{x_1^2 + x_2^2}\mathrm{e}_1 + x_3\mathrm{e}_2}{|x|} \quad x \in D.
    \]
    Then $g \in [L^2(0,T, L_\sigma^2(D))]^3$.
    Furthermore, for all $c,b,w > 0$ there exists a constant $C_\delta = C(c,b,\delta) >  0$ such that
    \begin{align*}
        \liminf_{\nu \to 0}\nu^{\delta/2}\E\int_0^T\int_{\partial D}\Big|\int_0^t\int_D \frac{c}{(\nu (t-s))^{3/2}}e^{-b\frac{|x-y|^2}{\nu (t-s)}}e^{-w\nu (t-s)}&g(y)dydW_s\Big|^2\H^2(dx)dt\\
        &\geq C_\delta T^{2-\delta/2} > 0.
    \end{align*}
\end{proposition}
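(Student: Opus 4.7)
The statement splits into the functional claim $g \in [L^2(0,T,L_\sigma^2(D))]^3$ and the liminf lower bound. For the first claim I observe that the direction vector $(-\sqrt{x_1^2+x_2^2}\mathbf{e}_1+x_3\mathbf{e}_2)/|x|$ has Euclidean norm $1$ at every $x \in D\setminus\{0\}$, so $|g(x)| = \dist(x,\partial D)^{-\delta/2} = (R-|x|)^{-\delta/2}$ and
\[
    \|g\|_{L^2(D)}^2 = 4\pi\int_0^R\frac{\rho^2}{(R-\rho)^\delta}\,d\rho < \infty
\]
for $\delta\in(0,1)$. The solenoidal property I would verify by a direct Cartesian differentiation; in the worst case replacing $g$ by $\mathbf{P}g$ preserves the pointwise boundary blow-up $|g(y)| \sim \dist(y,\partial D)^{-\delta/2}$ that powers the rest of the proof.

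For the lower bound, since the integrand against $dW_s$ is deterministic, Itô's isometry yields
\begin{align*}
    \E\Bigl|\int_0^t\int_D K_\nu(x-y,t-s)g(y)\,dy\,dW_s\Bigr|^2 = \int_0^t \bigl|(K_\nu(\cdot,t-s)*g)(x)\bigr|^2\,ds,
\end{align*}
where $K_\nu(z,r) = c(\nu r)^{-3/2}e^{-b|z|^2/(\nu r)}e^{-w\nu r}$. The heart of the argument is to lower bound, uniformly in $x\in\partial D$ and $r \in (0,T]$, the convolution $(K_\nu(\cdot,r)*g)(x)$ by a constant times $(\nu r)^{-\delta/4}$. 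To this end I would fix $x\in\partial D$, set up tubular coordinates $y = x - \xi_n n_x + y_\tau$ (with $\xi_n > 0$ measuring depth into $D$ and $y_\tau$ tangential), in which $R-|y| = \xi_n + O(\xi_n^2+|y_\tau|^2)$, and note that the unit direction $\vec v(y) := (-\sqrt{y_1^2+y_2^2}\mathbf{e}_1+y_3\mathbf{e}_2)/|y|$ extends continuously up to $\partial D$ so that $\vec v(y) = \vec v(x) + O(|y-x|)$. Restricting the integration to the half-ball $|y-x|\le\eta\sqrt{\nu r}$ (with $\eta$ small but fixed) and applying the parabolic rescaling $\xi_n = \sqrt{\nu r}\,u$, $y_\tau = \sqrt{\nu r}\,\zeta$ causes the $(\nu r)^{3/2}$ Jacobian to cancel the kernel prefactor and exposes the leading term
\begin{align*}
    c\,(\nu r)^{-\delta/4}\,\vec v(x)\int_0^\eta\int_{|\zeta|\le\eta}u^{-\delta/2}\,e^{-b(u^2+|\zeta|^2)}\,d\zeta\,du,
\end{align*}
plus a relative error $O(\sqrt{\nu r})$ coming from the curvature of $\partial D$, the directional correction, and the decay factor $e^{-w\nu r} \to 1$.

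Squaring gives $|(K_\nu(\cdot,t-s)*g)(x)|^2 \ge C_\delta^2 (\nu(t-s))^{-\delta/2}$ uniformly on $\partial D$ for $\nu$ small. Integrating in $s$ yields $C_\delta^2 \nu^{-\delta/2} t^{1-\delta/2}/(1-\delta/2)$, the surface integral over $\partial D$ contributes $4\pi R^2$, and a final integration in $t$ produces $T^{2-\delta/2}/(2-\delta/2)$; multiplying by $\nu^{\delta/2}$ cancels the $\nu$-dependence and leaves the claimed constant times $T^{2-\delta/2}$. The principal obstacle I anticipate lies in the local analysis at the boundary: it is essential both that restricting the $y$-integral to $D$ (a half-space rather than all of $\R^3$) does not annihilate the boundary blow-up, and that the vector-valued direction $\vec v(x)$ does not average to zero against the radially symmetric Gaussian, so that squaring does not cost any additional powers of $\nu$. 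This is precisely why the explicit, continuous direction field in the definition of $g$ is needed instead of a purely radial or oscillatory profile; the remaining estimates are Fubini together with elementary parabolic scaling.
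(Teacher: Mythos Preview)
Your approach is correct and shares the paper's core strategy: It\^o isometry followed by the parabolic rescaling $y = x - \xi\sqrt{\nu(t-s)}$ to extract the $(\nu(t-s))^{-\delta/2}$ blow-up, then integration in $s$, over $\partial D$, and in $t$. The tactical differences are worth recording. For the divergence-free claim the paper simply writes $g = (R-r)^{-\delta/2}\,\hat\phi$ in spherical coordinates and reads off $\nabla\cdot g = (r\sin\theta)^{-1}\partial_\phi\bigl[(R-r)^{-\delta/2}\bigr] = 0$, which is quicker than a Cartesian check. For the main estimate, the paper's application of the isometry places the square \emph{inside} the spatial integral (consistent with reading $dW_s$ as space--time white noise rather than a one-dimensional Brownian motion) and then invokes the one-line pointwise bound $|g(x-h)|^2 = (R-|x-h|)^{-\delta} \geq |h|^{-\delta}$ for $x\in\partial D$, which follows from the triangle inequality $R-|x-h|\leq|h|$; the passage to the limit is handled by Fatou's lemma together with the convergence $\chi_D(x-\xi\sqrt{\nu(t-s)})\to\chi_{\{\langle\xi,x\rangle_2>0\}}$. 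Your route---keeping the square outside the $y$-integral, freezing the direction field $\vec v(y)\approx\vec v(x)$, and carrying explicit $O(\sqrt{\nu r})$ remainders---is more laborious but makes the absence of vectorial cancellation completely transparent, and it is the version one needs under your reading of the stochastic integral.
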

\begin{proof}
    Notice that for $x \in B(0,R)$ then $\dist(x, \partial D) = R - |x|$ and $\widehat{\phi}(x) = \frac{-\sqrt{x_1^2 + x_2^2}\mathrm{e}_1 + x_3\mathrm{e}_2}{|x|}$ is the azimuthal unit vector. As such we can rewrite $g$ is spherical coordinates as 
    \[
        g(x) = \frac{1}{(R-r)^\delta}\widehat{\phi}
    \]
    where $|x| = r$. Then using spherical coordinates and the change of variables $R - r \mapsto r$ provides
    \begin{align*}
        \int_0^T\int_D|g|^2\;dxdt &= \int_0^T\int_0^R \int_{S^2}\frac{r^2}{(R - r)^\delta}\;dS(n)drdt\\
        &\leq 4\pi T\int_0^R \frac{(R-r)^2}{r^\delta}dr < \infty
    \end{align*}
    and by applying the definition of divergence in spherical coordinates we can show that $g$ is divergence free:
    \[
        \grad \cdot g = \frac{1}{r\sin\theta}\pd{}{\phi}\Big(\frac{1}{(R - r)^{\delta/2}}\Big)= 0. 
    \]
    
    Let $\chi_D(x)$ be the indicator function for the set $D$ and let $x \in \partial D$. Consider the change of variables $\xi = \frac{x-y}{\sqrt{\nu t}}$. Note the Triangle inequality implies that for all $h \in \R^3$
    \[
        |x-h| \leq |x| + |h| = R + |h| \Rightarrow |g(x-h)|^2 \geq \frac{1}{|h|^\delta}.
    \]
    Thus by applying Ito's isometry and our lower bound on $g(x-h)$ we obtain
    \begin{align*}
        S_\nu(t) &:= \nu^{\delta/2}\E\big|\int_0^t\int_D \frac{c}{(\nu (t-s))^{3/2}}e^{-b\frac{|x-y|^2}{\nu (t-s)}}e^{w\nu (t-s)}g(y)dydW_s\big|^2\\
        &= c^2\nu^{\delta/2} \E\big|\int_0^t\int_{\R^3} e^{-b|\xi|^2}e^{-w\nu (t-s)}g(x-\xi\sqrt{\nu (t-s)})\chi_D(x-\xi\sqrt{\nu (t-s)})\;d\xi dW_s\big|^2\\
        &=  c^2\nu^{\delta/2} \int_0^t\int_{\R^3} |e^{-b|\xi|^2}e^{-w\nu (t-s)}g(x-\xi\sqrt{\nu (t-s)})|^2\chi_D(x-\xi\sqrt{\nu (t-s)})\;d\xi ds\\
        &\geq c^2\nu^{\delta/2}\int_0^t\int_{\R^3} \frac{e^{-2b|\xi|^2}e^{-2w\nu (t-s)}}{|\xi|^\delta (\nu (t-s))^{\delta/2}}\chi_D(x-\xi\sqrt{\nu (t-s)})\;d\xi ds.
    \end{align*}
    Since $D = B(0, R)$ is convex, the radial lines $x - \xi\sqrt{\nu (t-s)}$ remain inside $D$ for $\nu$ sufficiently small whenever the angle between the vectors $x$ and $\xi$ is between $\frac{\pi}{2}$ and $\frac{3\pi}{2}$ radians. In other words if $\langle \cdot, \cdot \rangle_2$ is the $\ell^2$ inner product in $\R^3$ (i.e. the standard dot product in $\R^3$) then $x - \xi\sqrt{\nu (t-s)} \in D$ for some $\nu$ sufficiently small whenever $\langle x, \xi\rangle_2 < 0$. As such, the indicator functions $\chi_D(x - \xi\sqrt{\nu (t-s)}) \to \chi_{\langle \xi, x\rangle_2 < 0}$ in the sense of distributions as $\nu \to 0$ and for any fixed $x \in \partial D$, in the inviscid limit, the indicator function is supported over exactly half of the unit sphere (in $\xi$) relative to $x$.
    Hence by applying Fatou's Lemma and writing the $\xi$ integral in spherical coordinates, we obtain
    \begin{align*}
        \liminf_{\nu \to 0} \int_0^T\int_{\partial D} &S_\nu(t)\;\H^2(dx) dt\\
        &\geq \int_0^T\int_{\partial D} \liminf_{\nu \to 0}S_\nu(t)\;\H^2(dx)dt\\
        &= c^2 \int_0^T\int_0^t (t-s)^{-\delta/2}dsdt\int_{S^2} \int_{\{\xi \in \R^3 \mid \langle \xi, x\rangle_2 < 0\}} \frac{e^{-2b|\xi|^2}}{|\xi|^\delta}\;d\xi dS(x)\\
        &= \frac{c^2T^{2-\delta/2}}{(1-\delta/2)(2-\delta/2)}\int_{S^2}\int_{\{y \in S^2 \mid \langle y, x\rangle_2 < 0\}}\int_0^\infty e^{-2br^2}r^{2-\delta}\;drdS(y)dS(x)\\
        &= \frac{2\pi^2c^2T^{2-\delta/2}}{(2b)^{(3-\delta)/2}(1-\delta/2)(2-\delta/2)}\Gamma(3-\delta)\\
        &= C_\delta T^{2-\delta/2} > 0.
    \end{align*}
\end{proof}

\begin{proposition}
    For $\delta \in (0,1)$, let $p\delta < 1$ and $s \in (0,1)$ such that 
    \begin{equation}\label{ineq: positive exponent}
         \delta - 2s + 3 - \frac{6}{p} > 0
    \end{equation}
    then $g \in [L^2(0,T, H^s(D))]^3$. 
\end{proposition}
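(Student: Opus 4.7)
The plan is to reduce the problem to verifying $g\in W^{\delta/2,p}(D)$ and then invoke a fractional Sobolev embedding. Since $g$ is time-independent, the norm in $L^2(0,T,H^s(D))$ is simply $\sqrt{T}\,\|g\|_{H^s(D)}$, so it suffices to prove $g\in H^s(D)$. The hypothesis $p\delta<1$ is exactly the regularity threshold making $g\in W^{\delta/2,p}(D)$, and \eqref{ineq: positive exponent} is precisely the Sobolev scaling condition $\delta/2-3/p\ge s-3/2$ needed to embed $W^{\delta/2,p}$ into $H^s=W^{s,2}$.

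For the first step I would verify $g\in W^{\delta/2,p}(D)$ by direct computation. The $L^p$ bound follows as in the proof of Proposition~\ref{proposition: choice of forcing}: in spherical coordinates $\int_D|g|^p\,dx\sim\int_0^R r^2(R-r)^{-p\delta/2}\,dr$ is finite since $p\delta/2<1/2$. For the Gagliardo seminorm
\[
[g]^p_{W^{\delta/2,p}(D)}=\int_D\int_D \frac{|g(x)-g(y)|^p}{|x-y|^{3+(\delta/2)p}}\,dx\,dy,
\]
I would pass to spherical coordinates $x=r_x\omega_x$, $y=r_y\omega_y$. Writing $g(x)=(R-|x|)^{-\delta/2}\widehat\phi(x)$ with $\widehat\phi$ smooth and bounded away from the origin, the dominant contribution to $|g(x)-g(y)|^p$ comes from the radial difference. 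Using the approximation $|x-y|^2\approx(r_x-r_y)^2+R^2\theta^2$ (with $\theta$ the angular separation of $\omega_x,\omega_y\in S^2$) valid near the boundary, integrating out the two $S^2$ variables produces a factor of order $|r_x-r_y|^{-(1+(\delta/2)p)}$. The problem then reduces to the one-dimensional integral
\[
\int_0^{R_0}\int_0^{R_0}\frac{|u^{-\delta/2}-v^{-\delta/2}|^p}{|u-v|^{1+(\delta/2)p}}\,du\,dv,
\]
with $u=R-r_x$, $v=R-r_y$. The scaling $u=tv$ factors this as $\int_0^{R_0} v^{-p\delta}\,dv$ times a bounded $t$-integral (finite under $p\delta/2<1$), and the outer integral converges precisely when $p\delta<1$.

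For the second step I would apply the Sobolev embedding $W^{\delta/2,p}(D)\hookrightarrow H^s(D)$ on the bounded domain $D$. The scaling inequality $\delta/2-3/p\ge s-3/2$ is equivalent to \eqref{ineq: positive exponent}; together with $p\le 2$ and $\delta/2\ge s$, these give the desired embedding. The constraint $\delta/2\ge s$ is automatic from the other hypotheses: with $p\le 2$ we have $6/p\ge 3$, hence $2s<\delta+3-6/p\le\delta$, so $s<\delta/2$.

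The main technical obstacle lies in the angular reduction of Step~1. Controlling the contribution of $\widehat\phi$ (whose derivatives are unbounded near $x=0$) and uniformly justifying the approximation $|x-y|^2\approx(r_x-r_y)^2+R^2\theta^2$ require care, but are resolved by splitting $g$ via a smooth cutoff into a boundary part supported in a tubular neighborhood of $\partial D$ and an interior part. The interior part lies in every $H^\sigma$, and the analysis above applies directly to the boundary part.
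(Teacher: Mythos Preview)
Your Step~2 has a genuine gap. The embedding $W^{\delta/2,p}(D)\hookrightarrow H^s(D)$ requires $\delta/2\ge s$ regardless of $p$---Sobolev embeddings never gain derivatives, only integrability---and you obtain $s<\delta/2$ only after inserting the extra hypothesis $p\le 2$, which is \emph{not} part of the statement. For instance, take $\delta=0.4$, $s=0.4$, $p=2.45$: then $p\delta=0.98<1$ and $\delta-2s+3-6/p\approx 0.15>0$, so the stated hypotheses hold, yet $\delta/2=0.2<s$ and your embedding fails. More to the point, the paper's downstream use of this proposition (Remark~\ref{remark: Hs regularity of g} and Proposition~\ref{proposition: z Hgamma regularity}) takes $s=\gamma=\frac{1-\delta+2\varepsilon}{2-\delta}$, which for small $\delta$ is close to $\tfrac12$ and hence well above $\delta/2$; the admissible $p$ there is necessarily larger than $2$. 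So restricting to $p\le 2$ discards exactly the regime the paper needs.

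The paper's argument is structurally different and never passes through $W^{\delta/2,p}$. It uses the elementary inequality $|a^{-\delta/2}-b^{-\delta/2}|\le |a-b|^{\delta/2}(ab)^{-\delta/2}$ (with $a=R-|x|$, $b=R-|y|$) to bound $|g(x)-g(y)|^2$ pointwise by $|x-y|^{\delta}(R-|x|)^{-\delta}(R-|y|)^{-\delta}$ plus an angular correction, inserts this directly into the $H^s$ Gagliardo double integral, and then controls the resulting kernel $\int_D (R-|x|)^{-\delta}\int_D |x-y|^{\delta-3-2s}\tilde g(y)^2\,dy\,dx$ by a dyadic decomposition in $|x-y|$ combined with a Hedberg-type averaging estimate and Young's convolution inequality applied to $g^2\in L^p$. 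The parameter $p$ enters only through this last H\"older/Young step on $g^2$, not through any fractional $W^{\sigma,p}$ membership of $g$ itself; this is what (at least formally) allows the argument to reach values of $s$ above $\delta/2$.
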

\begin{proof}
    By the definition of $\widehat{\phi}$, the Triangle inequality, and Young's inequality
    \begin{align*}
        |\widehat{\phi}(x) - \widehat{\phi}(y)|^2 &= \Big(\frac{\sqrt{x_1^2 + x_2^2}}{|x|} - \frac{\sqrt{y_1^2 + y_2^2}}{|y|}\Big)^2 + \Big(\frac{x_3}{|x|} - \frac{y_3}{|y|}\Big)^2\\
        &\leq \frac{2|y|^2|x-y|^2 + 2|y|^2|x-y|^2}{|x|^2|y|^2}\\
        &= \frac{4|x-y|^2}{|x|^2}
    \end{align*}
    Then using the Triangle (and Reverse Triangle) inequality, we have that for all $x, y \in D$
    \begin{align*}
        |g(x) - g(y)| &= \Big|\frac{\widehat{\phi}(x)}{(R-|x|)^{\delta/2}} - \frac{\widehat{\phi}(y)}{(R-|y|)^{\delta/2}}\Big|\\
        &= \Big|\frac{1}{(R-|x|)^{\delta/2}} - \frac{1}{(R-|y|)^{\delta/2}}\Big| + \frac{|\widehat{\phi}(x) - \widehat{\phi}(y)|}{(R-|y|)^{\delta/2}}\\
        &\leq \frac{|x-y|^{\delta/2}}{(R-|x|)^{\delta/2}(R-|y|)^{\delta/2}} + \frac{2|x-y|}{|x|(R-|y|)^{\delta/2}}.
    \end{align*}
    Moreover, for $x,y \in D$ the max difference $|x-y| \leq 2R$ and we can extend $g$ to $\R^3$ as 
    \[
        \ds\tilde{g}(x) = \begin{cases}
            \frac{1}{(R-|x|)^{\delta/2}} & |x| < R\\
             0 & |x| > R
        \end{cases}.
    \]
    Putting this all together allows us to bound the $H^s$ semi-norm by
    \begin{align*}
        \int_D \int_D \frac{|g(x) - g(y)|^2}{|x-y|^{3+2s}}dxdy &\leq \int_D\int_D \frac{|x-y|^\delta}{|x-y|^{3+2s}(R-|x|)^{\delta}(R-|y|)^\delta}\;dydx \\
        &\quad + \int_D\int_D \frac{4}{(R-|y|)^\delta |x|^2|x-y|^{1+2s}}\;dxdy\\
        &\leq  \int_D \frac{1}{(R - |x|)^\delta}\int_{|x-y| \leq R}  \frac{|x-y|^\delta}{|x-y|^{3+2s}}\tilde{g}(y)^2\;dydx\\
        &\quad +  \int_D \frac{1}{(R - |x|)^\delta}\int_{R \leq |x-y| \leq 2R}  \frac{|x-y|^\delta}{|x-y|^{3+2s}}\tilde{g}(y)^2\;dydx\\
        &\quad + \int_D\int_D \frac{4}{(R-|y|)^\delta |x|^2|x-y|^{1+2s}}\;dxdy\\
        &= I_1 + I_2 + I_3.
    \end{align*}
    
    As $\tilde{g} \in L^2(\R^3)$  it follows quickly that for any $s > 0$ and $\delta \in (0,1)$
    \[
        I_2 \leq \int_D \frac{1}{(R - |x|)^\delta}\int_{R \leq |x-y| \leq 2R}  \frac{(2R)^\delta}{R^{3+2s}}\tilde{g}(y)^2\;dydx \leq 2^\delta R^{\delta - 2s-3}\|g\|_{L^2(D)}^2 < \infty.
    \]
    Next to study $I_1$ we use a technique from \cite{hedberg1972certain} to write the inner integral as a convolution which is averaged over a ball of radius $R$. 
    Let $p > 1$ be chosen such that $p\delta < 1$ and let $q\geq 1$ be its Holder conjugate, i.e. $\frac{1}{p} + \frac{1}{q} = 1$.  Then  by Young's Convolution inequality for all $r > 0$
    \[
        \|r^{-3} \chi_{B(0, r)} * \tilde{g}^2\|_{L^q(\R^3)} \leq r^{-3}\|\chi_{B(0,r)}\|_{L^w(\R^3)}\|\tilde{g}^2\|_{L^p(\R^3)} = r^{\frac{3-3w}{w}}(\frac{4\pi}{3})^{1/w}\|g^2\|_{L^p(D)}.
    \]
    Here 
    \[
        1 + \frac{1}{q} = \frac{1}{p} + \frac{1}{w} = 1 - \frac{1}{q} + \frac{1}{w} \Rightarrow w = \frac{q}{2}
    \]
    and when $p\delta < 1$
    \[
        \|g^2\|_{L^p(D)}^p = \int_D |g|^{2p}\;dx = \int_{S^2}\int_0^R \frac{r^2}{(R-r)^{p\delta}}drdS(n) \leq 4\pi R^2 \int_0^R u^{-p\delta} du < \infty.
    \]
    Thus by Holder's inequality
    \begin{align*}
        I_1 &= \int_D \frac{1}{(R-|x|)^\delta}\sum_{n=0}^\infty \int_{R2^{-n-1} \leq |x-y| \leq R2^{-n}}\frac{|x-y|^\delta}{|x-y|^{3+2s}}\tilde{g}^2(y)\;dydx\\
        &\leq \int_D \frac{1}{(R-|x|)^\delta}\sum_{n=0}^\infty \frac{(R2^{-n})^\delta}{(R2^{-n - 1})^{3+2s}}\int_{|x-y| \leq R2^{-n}}\tilde{g}^2(y)\;dydx\\
        &\leq 8\sum_{n=0}^\infty (R2^{-n})^{\delta - 2s} (R2^{-n})^{-3}\int_{D} g^2(x) \chi_{B(0, R2^{-n})}*\tilde{g}^2(x)\;dx \\
        &\leq C\|g^2\|_{L^p(D)}^2\sum_{n=0}^\infty (R2^{-n})^{\delta - 2s + \frac{6-3q}{q}}.
    \end{align*}
    To ensure that this geometric series converges we choose $\delta, s, p,q$ such that $p\delta < 1$ and
    \[
        \delta - 2s + \frac{6-3q}{q} = \delta - 2s + \frac{3p-6}{p}> 0.
    \]
    Lastly, we consider the $I_3$ integral due to differences in the azimuthal angle. It follows by Young's convolution inequality that provided $t \geq 1$ satisfies
    \[
        \frac{1}{p} + \frac{3}{4} + \frac{1}{t} = 2 \quad \text{ and } \quad t(1+2s) < 3
    \]
    then
    \begin{align*}
        I_3 &= \int_D\int_D \frac{1}{(R-|y|)^\delta}\frac{1}{|x-y|^{1+2s}}\frac{1}{|x|^2}\;dxdy\\
        &\leq \Big(\int_D\frac{1}{(R-|y|)^{p\delta}}dy\Big)^{1/p}\Big(\int_D \frac{1}{|y|^{8/3}}dy\Big)^{3/4}\Big(\int_D\frac{1}{|y|^{t(1+2s)}}\;dy)^{1/t} < \infty.
    \end{align*}
    Importantly we note that it is always possible to choose $t$ such that this is true. We see that by substituting in our relationship for $t$ into \eqref{ineq: positive exponent}  results in 
    \[
        0 < \delta - 2s + 3 - \frac{6}{p} = \delta - 2s - 9 + 6(2-\frac{1}{p}) = \delta -2s - \frac{9}{2} + \frac{6}{t}
    \]
    and thus 
    \[ 
        t(1+2s) < t(\delta - \frac{7}{2}) + 6 < 3 \Rightarrow t > \max\{1, \frac{3}{\frac{7}{2} - \delta}\}.
    \]
    The supremum (in $\delta$) of which is $\frac{6}{5}$, so for every value of $\delta$ we can choose $t$ sufficiently small so that $t(1+2s) < 3$ which implies $I_3$ is finite.
\end{proof}
\begin{remark}\label{remark: Hs regularity of g}
    Let $0 < \varepsilon \ll \frac{1}{2}$ and define $s = \frac{1-\delta + 2\varepsilon}{2-\delta}$ then \eqref{ineq: positive exponent} is satisfied for all $\delta < \frac{11- \sqrt{41}}{10} \approx 0.45969$.

    To see why, note that by substituting in for $s$ we make a common dominator for the fractions to get
    \begin{align*}
        \delta - \frac{2(1-\delta + 2\varepsilon)}{2-\delta} + 3 - \frac{6}{p} &=
        \frac{p(\delta - \delta^2 + 4 - 4\varepsilon) + 6\delta - 12}{p(2-\delta)} > 0\\
        &\Rightarrow p > \frac{12 - 6\delta}{4 + \delta - \delta^2 - 4\varepsilon}.
    \end{align*}
    Also, it is assumed that $p\delta < 1$ so 
    \[
        \frac{12 - 6\delta}{4 + \delta - \delta^2 - 4\varepsilon} < p < \frac{1}{\delta} \Rightarrow 0 < 4(1-\varepsilon)-11\delta + 5\delta^2.
    \]
    Which is satisfied for $\delta < \frac{11 - \sqrt{121 - 80(1-\varepsilon)}}{10} < \frac{11 - \sqrt{41}}{10}\approx 0.45969$
\end{remark}

Now that we have a possible candidate for our noise coloring, let us show that $z^\nu$ inherits this behavior at the wall. To avoid confusion, we keep $g$ according to Proposition \ref{proposition: choice of forcing} but for simplicity we take $z_0, f \equiv 0$. 
\begin{theorem}\label{thm: exist blow up along boundary}
    Let $g$ be as in Proposition \ref{proposition: choice of forcing} and assume $z_0, f \equiv 0$. Then there exists a kernel $K_t\in [L^\infty(\overline{D} \times \overline{D})]^{3 \times 3}$ for all $t > 0$ such that
    \[
        z^\nu(x,t) = \int_0^t\int_D K_{\nu(t-s)}(x,y)g(y)\;dydW_s
    \]
    is the unique weak solution to the linear Stokes problem  \eqref{eqn: general linear stokes problem} and 
    \begin{align*}
        \liminf_{\nu \to 0} \nu^{\delta/2}\E\int_0^T\|z^\nu\|_{L^2(\partial D)}^2\;dt > 0.
    \end{align*}
\end{theorem}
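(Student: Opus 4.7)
The plan is to combine the mild-solution representation from Subsection \ref{subsec: Stokes operator} with a Gaussian lower bound on the Stokes--Navier-slip heat kernel, and then invoke Proposition \ref{proposition: choice of forcing}. First I would take $K_t$ to be the integral kernel of $e^{-tA_{NS}}$. The spectral theorem applied to $A_{NS}$ yields the formal expansion $K_t(x,y) = \sum_{j\ge 1} e^{-\lambda_j t}\, q_j(x)\otimes q_j(y)$, which sits in $[L^\infty(\overline D\times\overline D)]^{3\times 3}$ for every $t>0$ by analyticity of the Stokes semigroup on a $C^{1,1}$ domain \cite{ter2020holder}. Plugging this kernel into the mild formulation from Subsection \ref{subsec: Stokes operator} and using $z_0\equiv 0$, $f\equiv 0$ delivers the stated representation for $z^\nu$; uniqueness of the weak solution follows from the stochastic semigroup theory of Da Prato.

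Next, Ito's isometry transforms $\E\int_0^T\|z^\nu(t)\|_{L^2(\partial D)}^2\,dt$ into a deterministic integral whose integrand involves $|K_{\nu(t-s)}(x,y) g(y)|^2$. The decisive step is then to prove the pointwise lower bound
\[
\bigl|K_{\nu t}(x,y)\,\widehat{\phi}(y)\bigr| \ge \frac{c_\star}{(\nu t)^{3/2}}\, e^{-b_\star|x-y|^2/(\nu t)}\, e^{-w_\star \nu t}
\]
with constants $c_\star,b_\star,w_\star>0$ independent of $\nu$, valid for all $x,y\in\overline D$ and $t\in(0,T]$. Such a bound makes the integrand dominate exactly the quantity estimated from below in Proposition \ref{proposition: choice of forcing}, so after multiplying through by $\nu^{\delta/2}$ the desired strict positivity of the $\liminf$ follows directly from that proposition.

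To obtain this Gaussian lower bound I would exploit the rotational symmetry of $B(0,R)$. The sector $\mathcal{S}$ of axisymmetric azimuthal vector fields $h=h_\phi(r,\theta)\,\widehat{\phi}$ is invariant under $e^{-tA_{NS}}$: such fields are automatically divergence-free, their vector Laplacians remain azimuthal so the Leray projection acts trivially on $\mathcal{S}$, and the Navier-slip condition specializes to a Robin condition on the scalar coefficient $h_\phi$ at $r=R$. Thus $A_{NS}$ restricted to $\mathcal{S}$ is unitarily equivalent to a scalar uniformly elliptic self-adjoint operator on an axisymmetric domain with Robin data, for which the classical Nash--Aronson two-sided Gaussian estimates hold uniformly up to the boundary. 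Since the forcing $g$ itself lies in $\mathcal{S}$, these scalar bounds translate directly into the required estimate on $K_{\nu t}(x,y)\widehat{\phi}(y)$.

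The principal obstacle is the rigorous verification of the Aronson-type lower bound up to the boundary for the reduced scalar Robin problem; this is genuinely more delicate than the familiar Neumann or Dirichlet cases because the Navier-slip condition mixes the field with its normal derivative through $\alpha$. The axisymmetric reduction afforded by the choice $D = B(0,R)$ is what keeps this step tractable --- a direct matrix-valued lower bound on the full Stokes kernel without symmetry would be substantially harder, and is the main reason the theorem restricts to the ball.
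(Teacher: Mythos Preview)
Your proposal is correct and its skeleton matches the paper exactly: represent $z^\nu$ through the kernel of $e^{-tA_{NS}}$, establish a lower Gaussian bound on that kernel, and then feed the resulting inequality into Proposition~\ref{proposition: choice of forcing}. Where you diverge is in how the lower Gaussian bound is justified. The paper simply cites \cite{ter2020holder} for the inequality $K_t(x,y)\ge c\,t^{-3/2}e^{-b|x-y|^2/t}e^{-wt}=:H_t(x,y)$ as a direct consequence of self-adjointness, and then passes from $K$ to the Gaussian minorant $H$ in the stochastic integral using ``$g\ge 0$, $K_t,H_t\ge 0$'' without further comment. Your route through the invariant sector of axisymmetric azimuthal fields is more explicit: it reduces the vector-valued question to a scalar Robin heat kernel, which is the setting in which two-sided Aronson bounds are actually proved in the literature. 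This buys transparency about why the special structure of $g$ and of $D=B(0,R)$ matters, at the price of having to confront the singular zeroth-order term $-(r\sin\theta)^{-2}$ in the reduced operator, which takes you slightly outside textbook uniformly-elliptic Nash--Aronson theory (the axis degeneracy would need separate handling, e.g.\ via weighted estimates or by pulling the bound back from the smooth 3D kernel). The paper's black-box citation sidesteps that wrinkle but is correspondingly vague about what a pointwise inequality ``$K_t\ge H_t$'' means for a matrix kernel acting on solenoidal fields and why it survives the stochastic integral; your symmetry reduction is one honest way to fill that in.
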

\begin{proof}
    Recall that $B(0,R)$ is of class $C^1$ and $\alpha \in L^\infty(\partial D)$. Moreover the Stokes operator $A_{NS} = \P(-\Delta)$ is associated with the sesquilinear form $a_{\alpha, \nu}: H_{\sigma, \tau}^1(D) \times H_{\sigma, \tau}^1(D) \to \R$ 
    \[
        a_{\alpha, \nu}(z, \phi) = \nu \int_{\partial D} \alpha z \cdot \phi \;\H^2(dx) + \nu \int_{D}\grad z:\grad \phi\;dx - \int_D \phi \cdot gdW_t(x).
    \]
    Thus the semigroup generated by $A_{NS}$: $e^{-tA_{NS}}$, has a kernel $K_t \in [L^\infty(\overline{D} \times \overline{D})]^3$ for all $t > 0$ \cite{ter2020holder}. As such the solution $z^\nu$ can be represented for all $x \in \overline{D}$ and $t > 0$ as
    \[
         z^\nu(x,t) = \int_0^t e^{-\nu(t-s)A_{NS}}g\;dW_s = \int_0^t\int_D K_{\nu(t-s)}(x,y)g(y)\;dW_s(y).
    \]
    Moreover, as $A_{NS}$ is a self-adjoint operator, $K_t$ also satisfies the lower Gaussian bound \cite{ter2020holder}: for all $x,y \in \overline{D}$ and $t > 0$ there exists $b, c, w > 0$ such that 
    \[
        K_t(x,y) \geq ct^{-3/2}e^{-b\frac{|x-y|^2}{t}}e^{-wt} =: H_t(x,y).
    \]
    Then as $g \geq 0$ and $K_t, H_t \geq 0$ 
    \[
        \E\Big|\int_0^t\int_D K_{\nu(t-s)}(x,y)g(y)dydW_s\Big|^2 \geq \E\Big|\int_0^t\int_D H_{\nu(t-s)}(x,y)g(y)dydW_s\Big|^2.
    \]
    Therefore, by Proposition \ref{proposition: choice of forcing}
    \begin{align*}
        \liminf_{\nu \to 0}\nu^{\delta/2}\E\int_0^T\|z^\nu\|_{L^2(\partial D)}^2
        &= \liminf_{\nu \to 0}\nu^{\delta/2} \E\int_0^T\int_{\partial D}\Big|\int_0^t\int_D K_{\nu (t-s)}(x,y)g(y)\;dydW_s\Big|^2d\H^2(x)dt\\
        &\geq \liminf_{\nu \to 0}\nu^{\delta/2}\E\int_0^T\int_{\partial D}\Big|\int_0^t\int_D H_{\nu(t-s)}(x,y)g(y)\;dydW_s\Big|^2d\H^2(x)dt\\
        &\geq C_\delta T^{2-\delta/2} > 0.
    \end{align*}
\end{proof}

\begin{remark}\label{remark: Besov Space Interpolation}
While we have shown that the kinetic energy at the wall can be made to blow up like $\nu^{\delta/2}$ we cannot apply the same argument as in Remark \ref{remark: Naive approach} since its blow up rate is too slow for the interpolation between $L^2(D)$ and $H^1(D)$. Instead we amend this approach to instead interpolate between $H^s$ and $H^1$ for an appropriate choice of $s$.

\end{remark}

\begin{proposition}\label{proposition: z Hgamma regularity}
    Let $z_\delta^\nu$ be the solution to the Stokes problem under the same assumptions as in Theorem \ref{thm: exist blow up along boundary}. Let $\delta < \frac{11-\sqrt{41}}{10}$ and $0 < \varepsilon \ll \frac{\delta}{4}$ then $\gamma = \frac{1-\delta + 2\varepsilon}{2-\delta} \in (\frac{1-\delta}{2-\delta}, \frac{1}{2})$ and
    \[
        \sup_{\nu \in (0, 1)} \E\int_0^T\|z_\delta^\nu\|_{H^\gamma(D)} < \infty.
    \]
\end{proposition}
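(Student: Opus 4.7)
The plan is to bound the fractional Sobolev norm of $z_\delta^\nu$ by splitting off the $L^2$ contribution, handled by the energy inequality \eqref{eqn: Stokes energy uniform bounds}, from a homogeneous fractional piece that I will control via an eigenfunction expansion of the stochastic convolution. Since $z_0 \equiv 0$ and $f \equiv 0$, the mild formulation collapses to
\[
z_\delta^\nu(t) = \int_0^t e^{-\nu(t-s)A_{NS}} g \, dW_s,
\]
so the whole analysis reduces to understanding how $A_{NS}^{\gamma/2}$ acts on this stochastic convolution.

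First I would record (or cite from \cite{ter2020holder}, combined with Lions--Magenes-type interpolation between $\Dom(A_{NS}^0)=[L_\sigma^2(D)]^3$ and $\Dom(A_{NS}^{1/2})\subset [H^1(D)]^3$) the norm equivalence
\[
\|h\|_{H^\gamma(D)}^2 \simeq \|h\|_{L^2(D)}^2 + \|A_{NS}^{\gamma/2} h\|_{L^2(D)}^2 \qquad \text{for } \gamma \in (0,\tfrac{1}{2}),
\]
which holds precisely because below the trace threshold $\gamma = 1/2$ the Navier boundary condition is invisible to the fractional power. This is why the proposition insists $\varepsilon \ll \delta/4$, so that $\gamma=(1-\delta+2\varepsilon)/(2-\delta)$ stays strictly below $1/2$.

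Next, using that $A_{NS}^{\gamma/2}$ commutes with $e^{-\nu t A_{NS}}$ and expanding $g = \sum_j \langle g,q_j\rangle q_j$ against the eigenbasis of $A_{NS}$, I would compute
\[
A_{NS}^{\gamma/2} z_\delta^\nu(t) = \sum_j \lambda_j^{\gamma/2}\langle g,q_j\rangle \, q_j \int_0^t e^{-\nu(t-s)\lambda_j}\, d\beta_j(s).
\]
Ito's isometry together with orthonormality of $\{q_j\}$ and the elementary bound $1-e^{-x}\leq x$ yields
\[
\E\|A_{NS}^{\gamma/2} z_\delta^\nu(t)\|_{L^2}^2 = \sum_j \lambda_j^{\gamma} \langle g,q_j\rangle^2 \frac{1-e^{-2\nu t \lambda_j}}{2\nu \lambda_j} \leq t\, \|A_{NS}^{\gamma/2} g\|_{L^2}^2,
\]
uniformly in $\nu$. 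By the preceding proposition and Remark \ref{remark: Hs regularity of g}, the hypothesis $\delta < (11-\sqrt{41})/10$ delivers $g \in [H^\gamma(D)]^3$ with $\|A_{NS}^{\gamma/2} g\|_{L^2}\lesssim \|g\|_{H^\gamma}<\infty$ via the norm equivalence above. Integrating in $t$, then applying Cauchy--Schwarz in time together with Jensen's inequality, gives
\[
\E\int_0^T \|A_{NS}^{\gamma/2} z_\delta^\nu\|_{L^2}\, dt \leq \tfrac{1}{\sqrt{2}}\,T^{3/2}\,\|A_{NS}^{\gamma/2} g\|_{L^2},
\]
with a constant independent of $\nu$. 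Adding the analogous $L^2$-in-space bound obtained directly from \eqref{eqn: Stokes energy uniform bounds} produces the claimed uniform estimate.

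The main obstacle is the functional-analytic identification $\Dom(A_{NS}^{\gamma/2}) = [L_\sigma^2(D)]^3 \cap [H^\gamma(D)]^3$ with equivalent norms for $\gamma<1/2$. Everything downstream is a short calculation with Ito's isometry plus the sharp estimate $1-e^{-x}\leq x$ that kills the apparent $\nu^{-1}$ loss; the subtlety lies entirely in staying below the half-derivative trace threshold so that fractional powers of the Stokes operator with Navier slip coincide with the standard Bessel potential scale and the divergence-free and boundary conditions do not interfere.
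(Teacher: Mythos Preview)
Your proof is correct and follows essentially the same route as the paper: both reduce to Ito's isometry in $H^\gamma$, bound the semigroup acting on $g$, and invoke the previous proposition together with Remark~\ref{remark: Hs regularity of g} to get $g\in H^\gamma$. The paper phrases the semigroup step abstractly (``$e^{-\nu tA_{NS}}$ is $C_0$, hence $\|e^{-\nu tA_{NS}}\|\le Me^{\nu\omega_0 t}$'') while you make it explicit via the eigenfunction expansion and the inequality $1-e^{-x}\le x$; this is cosmetic, but your version has the virtue of showing transparently why no $\nu^{-1}$ loss survives, and you correctly flag the identification $\Dom(A_{NS}^{\gamma/2})\simeq [H^\gamma(D)]^3\cap[L^2_\sigma(D)]^3$ for $\gamma<\tfrac12$ as the one nontrivial input, which the paper uses implicitly when it applies the $C_0$ bound in the $H^\gamma$ norm.
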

\begin{proof}
    By Ito's Isometry
    \[
        \E\int_0^T\|z_\delta^\nu\|_{H^\gamma(D)}^2 = \E\int_0^T \int_0^t\|e^{-\nu(t-s)A_{NS}}g\|_{H^\gamma(D)}^2\;dsdt.
    \]
    Then since $B(0,R)$ is $C^\infty$, it follows that the Stokes semigroup $e^{-\nu tA_{NS}}$ is $C_0$ \cite{ter2020holder} and thus there exists constants $\omega_0 \geq 0$ and $M \geq 1$ such that $\|e^{-\nu tA_{NS}}\| \leq Me^{\nu \omega_0 t}$. As $T < \infty$  
    it follows from Remark \ref{remark: Hs regularity of g} that 
    \[
        \sup_{\nu \in (0,1)} \E\int_0^T\|z_\delta^\nu\|_{H^\gamma(D)}^2 \leq \sup_{\nu \in (0,1)}M\E\int_0^T \int_0^t e^{\omega_0(t-s)}\|g\|_{H^\gamma(D)}^2\;dsdt < \infty.
    \]
\end{proof}

\begin{theorem}\label{thm: exist anomalous dissipation}
    Under the same assumpations as Theorem \ref{thm: exist blow up along boundary} with $\delta < \frac{11 - \sqrt{41}}{10}$, if $z_{\delta}^\nu$ is the weak solution to \eqref{eqn: general linear stokes problem}, then 
    \[
        \limsup_{\nu \to 0}\nu\E\int_0^T\|\grad z_{\delta}^\nu\|_{L^2(D)}^2 > 0.
    \]
\end{theorem}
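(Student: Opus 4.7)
The plan is to combine three ingredients from the preceding section: the $\nu^{\delta/2}$ lower bound for the mean-square boundary trace from Theorem \ref{thm: exist blow up along boundary}, the uniform $H^\gamma$-regularity from Proposition \ref{proposition: z Hgamma regularity}, and a carefully tuned fractional Sobolev interpolation inequality that trades the slow boundary blow-up for a lower bound on the viscous dissipation.

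First I would fix $0<\varepsilon\ll\delta/4$, choose $\gamma = \frac{1-\delta+2\varepsilon}{2-\delta}\in(\frac{1-\delta}{2-\delta},\frac{1}{2})$ as in Proposition \ref{proposition: z Hgamma regularity}, and set $s = \frac{1}{2}+\varepsilon$. A direct computation shows that $s = (1-\theta)\gamma+\theta$ precisely when $\theta = \delta/2$. Since $s>\frac{1}{2}$, the trace theorem followed by the standard fractional Sobolev interpolation on $D$ gives
$$\|z_\delta^\nu\|_{L^2(\partial D)}\le C\|z_\delta^\nu\|_{H^s(D)}\le C\|z_\delta^\nu\|_{H^\gamma(D)}^{1-\delta/2}\|z_\delta^\nu\|_{H^1(D)}^{\delta/2}.$$
Squaring, applying H\"older's inequality in $(\omega,t)$ with conjugate exponents $\frac{1}{1-\delta/2}$ and $\frac{2}{\delta}$, and multiplying the result by $\nu^{\delta/2}$, I obtain
$$\nu^{\delta/2}\,\E\int_0^T\|z_\delta^\nu\|_{L^2(\partial D)}^2\,dt \le C\Bigl(\E\int_0^T\|z_\delta^\nu\|_{H^\gamma(D)}^2\,dt\Bigr)^{1-\delta/2}\Bigl(\nu\,\E\int_0^T\|z_\delta^\nu\|_{H^1(D)}^2\,dt\Bigr)^{\delta/2}.$$
The exact matching $\theta = \delta/2$ is precisely what allows the pre-factor $\nu^{\delta/2}$ to be absorbed into the viscous dissipation term.

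Next I would take $\liminf_{\nu\to 0}$ on both sides. Theorem \ref{thm: exist blow up along boundary} gives a strictly positive lower bound on the left side, while Proposition \ref{proposition: z Hgamma regularity} keeps the first factor on the right uniformly bounded in $\nu$, so rearranging yields $\liminf_{\nu\to 0}\nu\,\E\int_0^T\|z_\delta^\nu\|_{H^1(D)}^2\,dt\ge C'>0$. To pass from the full $H^1$ norm to the gradient norm, I write $\|z_\delta^\nu\|_{H^1(D)}^2=\|z_\delta^\nu\|_{L^2(D)}^2+\|\grad z_\delta^\nu\|_{L^2(D)}^2$ and use the energy estimate \eqref{eqn: Stokes energy uniform bounds}, which gives $\sup_\nu\E\sup_{t\le T}\|z_\delta^\nu\|_{L^2(D)}^2<\infty$. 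Hence $\nu\,\E\int_0^T\|z_\delta^\nu\|_{L^2(D)}^2\,dt\to 0$ as $\nu\to 0$, the positive liminf transfers to the gradient term, and bounding $\limsup$ from below by $\liminf$ concludes the proof.

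The main obstacle is the delicate parameter matching in the central calculation: the whole scheme depends on choosing the interpolation triple $(\gamma,s,1)$ so that the residual power of $\nu$ after interpolation is exactly zero. Remark \ref{remark: Besov Space Interpolation} already explains why the naive $(L^2,H^1)$ interpolation fails since the boundary blow-up rate $\nu^{\delta/2}$ is too slow for $\delta<1$. The admissible window for $\gamma$ collapses to the single point $\frac{1-\delta}{2-\delta}$ at the trace threshold $s=\frac{1}{2}$, which is why Proposition \ref{proposition: z Hgamma regularity} was proved uniformly in $\nu$ for $\gamma$ arbitrarily close to (but strictly below) $\frac{1}{2}$, and why this in turn forces the structural constraint $\delta<\frac{11-\sqrt{41}}{10}$.
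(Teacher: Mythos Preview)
Your proposal is correct and follows essentially the same route as the paper: fix the fractional exponent $\gamma=\frac{1-\delta+2\varepsilon}{2-\delta}$, use the trace bound into $H^{1/2+\varepsilon}$, interpolate between $H^\gamma$ and $H^1$ so that the interpolation parameter comes out exactly $\theta=\delta/2$, apply H\"older in $(\omega,t)$, and then kill the $L^2$ part of the $H^1$ norm using the uniform energy bound \eqref{eqn: Stokes energy uniform bounds}. The only cosmetic differences are that the paper works with $\limsup$ throughout (invoking $\limsup\ge\liminf$ implicitly at the start rather than at the end) and writes $s_\delta$ for your $\gamma$; your choice to carry $\liminf$ and upgrade to $\limsup$ only in the last line is equally valid.
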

\begin{proof}
    \textbf{Step 1:} Let $0 < \varepsilon \ll \frac{\delta}{2}$. Define $s_\delta = \frac{1-\delta + 2\varepsilon}{2-\delta}$. By Proposition \ref{proposition: z Hgamma regularity}
    \[
        \sup_{\nu \in (0,1)}\E\int_0^T\|z_{\delta}^\nu\|_{H^{s_\delta}(D)}^2 < \infty.
    \]
    \textbf{Step 2:} As $D = B(0,R)$ is $C^1$, there exists a bounded linear trace operator $\mathrm{tr}: [H^{1/2 + \varepsilon}(D)]^3 \to [L^2(\partial D)]^3$ \cites{schneider2011traces, adams2003sobolev}. Moreover, the Sobolev space $H^{1/2 + \varepsilon}(D)$ is an interpolation space for $H^{s_\delta}(D)$ and $H^1(D)$ 
    with interpolation exponent $\theta \in (0,1)$ given by  \cites{adams2003sobolev, triebel1973spaces}
    \[
        \frac{1}{2} + \varepsilon = (1-\theta)s_\delta + \theta \Rightarrow \theta = \frac{\delta}{2}.
    \]
    
    Therefore, after applying Holder's inequality with conjugates $p = \frac{2}{\delta}$ and $q = \frac{2}{2-\delta}$ we get
    \begin{align}\label{eqn: interpolation}
        \E\int_0^T\|z_{\delta}^\nu\|_{L^2(\partial D)}^2 &\leq C\E\int_0^T\|z_{\delta}^\nu\|_{H^{1/2 + \varepsilon}(D)}^2\nonumber\\
        &\leq C\Big(\E\int_0^T\|z_{\delta}^\nu\|_{H^{s_\delta}(D)}^2\Big)^{\frac{2-\delta}{2}}\Big(\E\int_0^T\|z_{\delta}^\nu\|_{H^1(D)}^2\Big)^{\frac{\delta}{2}}.
    \end{align}
    \textbf{Step 3:}
    It then follows from Theorem \ref{thm: exist blow up along boundary} and \eqref{eqn: interpolation} that
    \begin{align*}
        0 &< \limsup_{\nu \to 0}\nu^{\frac{\delta}{2}}\E\int_0^T\|z_\delta^\nu\|_{L^2(\partial D)}^2\;dt\\
        &\leq \limsup_{\nu \to 0}C\Big(\E\int_0^T\|z_{\delta}^\nu\|_{H^{s_\delta}(D)}^2\Big)^{\frac{2-\delta}{2}}\Big(\nu\E\int_0^T\|z_{\delta}^\nu\|_{H^1(D)}^2\Big)^{\frac{\delta}{2}}\\
        &\leq C \limsup_{\nu \to 0}\Big(\nu\E\int_0^T\|\grad z_{\delta}^\nu\|_{L^2(D)}^2\;dt\Big)^{\frac{\delta}{2}}.
    \end{align*}
    Here the last inequality is due to the uniform bound on the kinetic energy in \eqref{eqn: Stokes energy uniform bounds} so
    \begin{align*}
        \limsup_{\nu \to 0}\nu\E\int_0^T\|z_{\delta}\|_{H^1(D)}^2 &=  \limsup_{\nu \to 0}\nu\E\int_0^T\|z_{\delta}\|_{L^2(D)}^2 +  \limsup_{\nu \to 0}\nu\E\int_0^T\|\grad z_{\delta}\|_{L^2(D)}^2\\
        &= \limsup_{\nu \to 0}\nu\E\int_0^T\|\grad z_{\delta}\|_{L^2(D)}^2.
    \end{align*}
\end{proof}

\begin{remark}
    Here we were able to construct a linear problem with a non-vanishing viscous dissipation term by concentrating the kinetic energy at the wall.  This shows that even for non-statistically stationary solutions \eqref{eqn: anomalous dissipation} is not a good measure of how the nonlinearity contributes to the increase in small scale oscillations and accelerates viscous dissipation within a turbulent flow. In part, this is because \eqref{eqn: anomalous dissipation} only measures  the total enstrophy in the system and not how the convective acceleration speeds up dissipation within the interior \cite{chen2010study}. As such if we choose our forcing appropriately we can create a sufficient amount of vorticity at the boundary so that the average total enstrophy in the system blows up at its max possible rate of $O(\nu^{-1})$. 
\end{remark}
\begin{remark}
   Previously Bedrossian et.al. \cite{bedrossian2019sufficient} showed that 
   global anomalous dissipation could be achieved by statistically stationary solutions to the heat equation where nothing ``anomalous" is actually occurring. This pointed to the fact that \eqref{eqn: anomalous dissipation} is not a good definition for saying a fluid exhibits anomalous dissipation in the context of stochastic flows. Similarly, we have constructed an example of a solution to the linear Stokes problem which satisfies \eqref{eqn: anomalous dissipation} but is not statistically stationary which is a complementary result. However, unlike the result of \cite{bedrossian2019sufficient}, our approach can also be extended to the case of deterministic solutions by choosing $f$ to blow up at the boundary and setting $g \equiv 0$. Hence \eqref{eqn: anomalous dissipation} is not a good definition for saying a fluid exhibits anomalous dissipation if one only requires the external forcing to be $L^2$. 
\end{remark}
\begin{remark}\label{remark: local anomalous dissipation}
   In order to account for the issue of statistically stationary solutions Bedrossian et.al. \cite{bedrossian2019sufficient} suggested the concept of a solution satisfying what they call as weak anomalous dissipation where
   \[   
        \lim_{\nu \to 0}\nu\E\|u^\nu(t)\|_{L^2(D)}^2 = 0 \quad \forall t \in [0, T].
   \]
   However by the uniform bounds on the kinetic energy we see that both $u^\nu$ and $z^\nu$ satisfy this definition as well meaning this does not capture the role of the nonlinearity to accelerate the amount of dissipation that occurs within the interior of the flow. Instead we posit that anomalous dissipation should be measured as a local phenomena instead of a global one. 
   
   One possible local approach would be that instead of using \eqref{eqn: anomalous dissipation}, for a non-statistically stationary fluid to truly exhibit anomalous dissipation it should hold that 
   \[
        \limsup_{\nu \to 0}\nu\E\int_0^T\|\grad u^\nu\|_{L_{loc}^2(D)}^2 > 0.
   \]
   This does not work in the case of statistically stationary solutions due to the heat equation (again) satisfying this bound without anything anomalous occurring. As such we expect that one can similarly construct non-statistically stationary solutions using the techniques here which also satisfy this requirement possibly by using $g$ like we defined here but it concentrates energy on concentric shells with radii from a countable dense set subset of $(0, \infty)$.
   
   Another (local) approach, is to just measure how much the nonlinearity contributes to the dissipation of energy using the regularity measure of Duchon and Robert\cite{duchon2000inertial}:
   \[
        \limsup_{\nu \to 0}\E\int_0^T\int_U D(u^\nu)(dx)dt > 0
   \]
   where 
   \[
        D(u^\nu)(x) = \lim_{\ell \to 0} \frac{1}{4}\int_{D} \grad \phi_\ell(y)\cdot \delta_y u^\nu(x) |\delta_y u^\nu(x)|^2\;dy \quad \forall \; x \in D.
   \]
   Here $\phi_\ell$ is a standard mollifier of size $\ell$ and $\delta_y u^\nu(x) = u^\nu(x+y) - u^\nu(x)$. See \cites{duchon2000inertial, dudley2024necessary} for how $D(u^\nu)$ acts as a measure of the dissipation due to the convective term in the energy balance over a torus. 
\end{remark}
\begin{remark}
    By construction the noise $gdW_t$ is concentrated at the boundary. This is strictly different from Kolmogorov's theory of turbulence which assumes that the external forcing on the system is confined to only the largest length scales of the problem (i.e. within the interior of the domain). Nevertheless one can still analyze the structure functions over the interior such as in \cite{papathanasiou2021sufficient} to examine the impact of the nonlinearity on the energy dynamics. 
\end{remark}
\begin{remark}
    While we have shown that the existence of (global) anomalous dissipation for a system subject to an arbitrary choice of $[L^2(D)]^3$ forcing and initial conditions, is not well defined as a nonlinear phenomena, one could alternatively redefine the problem to be subject to $[L^\infty(D)]^3$ forcing, like in Kolmogorov's theory of turbulence. In this way the forcing will only exist over the largest length scales of the problem and (possibly) cannot be concentrated on sets of Lebesgue measure 0. In this case, the nonlinearity will (possibly) be the driving source of vorticity generation and once again allow \eqref{eqn: anomalous dissipation} to be a good definition of anomalous dissipation. This problem remains open, however we note that vorticity will still be generated at the wall so one will need to account for how this affects the existence of global anomalous dissipation.
\end{remark}

\section{Simulation Examples}\label{sec: numerical simulations}
In order to confirm the results from Section \ref{sec: exist anomalous dissipation} we will simulate the linear Stokes problem over a semi-infinite plate and inside a sphere and measure the amount of viscous dissipation as $\nu \to 0$. Throughout this section we implement a finite difference approach while evolving in time using the Euler-Maruyama method (a forward Euler approach in the context of deterministic flows) for simplicity. In order to ensure that the dissipation is fully resolved we take the spatial step size on the order of the Kolmogorov length scale $dy = \nu^{3/4}$ and temporal step size of $dt = 0.005$ (smaller than the Kolmogorov time scale for all values of $\nu$ we consider here and is compatible with the CFL condition). Finally for all stochastic simulations we average the results over 250 different realizations of the flow.

First we consider the numerically simplest case: an incompressible fluid above an infinite stationary plate contained within the plane $y = 0$. We will also assume that the flow is symmetric with respect to both the $x$ and $z$ axes and decays to 0 as $y \to \infty$. We take our coloring to be $g(x,y,z) = [0, 0, \frac{1}{y^{\delta/2}}]$ with the intial condition of $z_0 \equiv 0$. For now we will fix $\delta = 0.75$. Later we will check to see how the various quantities of interest change with $\delta$. For the actual simulation space we will take the domain to have a height of $y_{max} = 10$, and we fix the slip length $\alpha = 0.0005$. 

\begin{figure}
    \centering
    \begin{subfigure}[b]{0.45\textwidth}
         \centering
         \includegraphics[width=\textwidth]{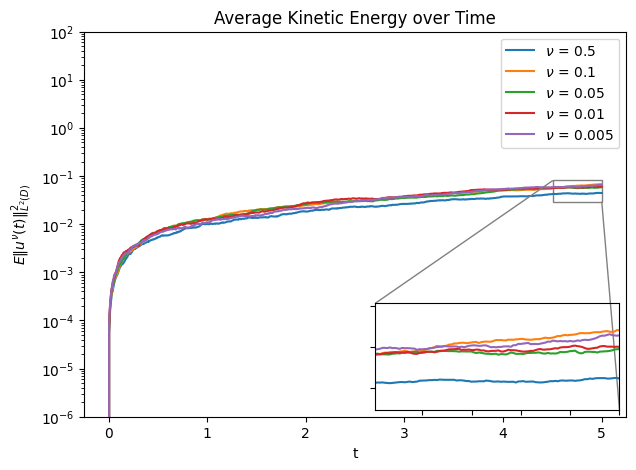}
         \caption{Average Kinetic Energy}
         \label{fig: half space kinetic energy}
     \end{subfigure}
     \hfill
     \begin{subfigure}[b]{0.45\textwidth}
         \centering
         \includegraphics[width=\textwidth]{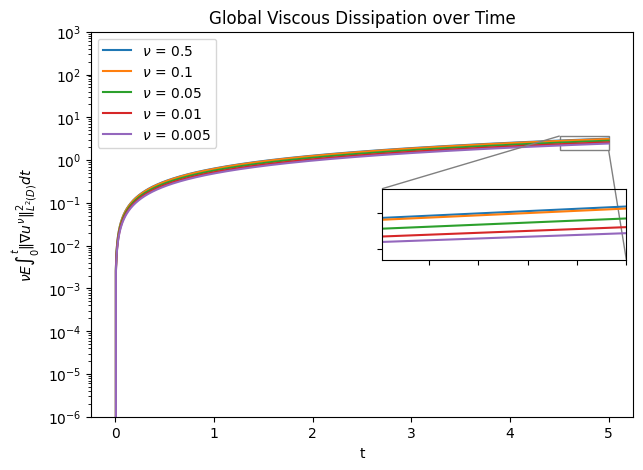}
         \caption{Average Dissipation}
         \label{fig: half space dissipation}
     \end{subfigure}
     \caption{The Average Kinetic Energy and Viscous Dissipation above an infinite plate}
     \label{fig: half space energy and dissipation}
\end{figure}

Due to the symmetry assumption in this setting we can reduce the full 3D problem to a 1D problem which is perpendicular to the plate greatly simplifying the numerical complexity of the problem. In Figure \ref{fig: half space energy and dissipation}, we show that both the mean global kinetic energy and global viscous dissipation remain roughly constant as $\nu \to 0$. This agrees with our earlier analysis that along the boundary the kinetic energy is blowing up as $\nu \to 0$, so even through the energy within the interior of the flow is uniformly bounded with respect to $\nu$, the viscous dissipation does not vanish as vorticity continues to be created from the singularity in the noise at the boundary. See how the mean kinetic energy at the boundary remains roughly constant in Figure \ref{fig: half space kinetic energy at wall}. 

\begin{figure}
    \centering
    \includegraphics[width=0.5\linewidth]{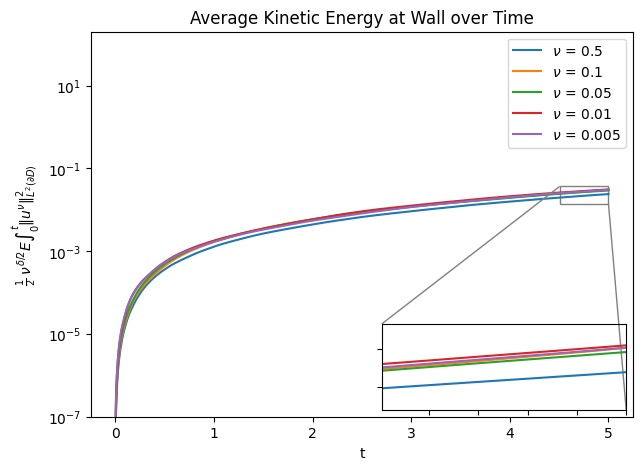}
     \caption{The Average Kinetic Energy at the Plate}
     \label{fig: half space kinetic energy at wall}
\end{figure}

We note that a similar problem to what we have analyzed so far is Stokes' 1st problem where we take a no-slip condition at the boundary and allow the plate to oscillate with fixed speed $U$. This is analogous to the problem we have analyzed as the forcing is concentrated at the boundary and a Brownian motion is normally distributed about 0 so in our case the noise $gdW_t$ may flip signs with every time step. However we note that the creation of anomalous dissipation and the blow up of the kinetic energy are not due to the oscillations but instead due to the singularity of $g$ at the boundary. Indeed, if we instead look at the deterministic system with external forcing still given as $f(x,y,z) =  [0, 0, \frac{1}{y^{\delta/2}}]$ we still see the existence of global anomalous dissipation and a blow up in the kinetic energy at the boundary. See Figure \ref{fig: half space deterministic viscous dissipation}. 
Moreover, looking at the various quantities for the deterministic and stochastic settings, we see that the oscillations at the boundary actually reduce how much energy can accumulate at the boundary and how much vorticity can be introduced into the bulk flow. Most likely this is because in the deterministic setting, the forcing pushes the flow in only one direction, while in the stochastic case the forcing is able to change directions --- meaning that in the deterministic setting the fluid velocity at the boundary increases much faster due to a mono-directional forcing while the oscillatory-like forcing from the stochastic case limits how fast the velocity at the boundary can grow. 

\begin{figure}
    \centering
    \begin{subfigure}[b]{0.45\textwidth}
         \centering
         \includegraphics[width=\textwidth]{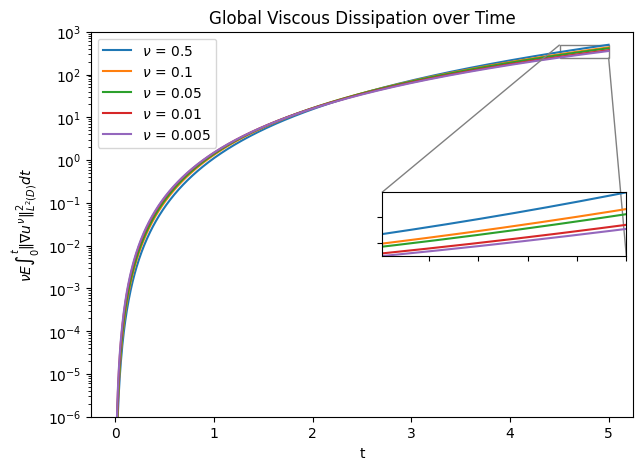}
         \caption{Global Viscous Dissipation}
         \label{fig: half space deterministic dissipation}
     \end{subfigure}
     \hfill
     \begin{subfigure}[b]{0.45\textwidth}
         \centering
         \includegraphics[width=\textwidth]{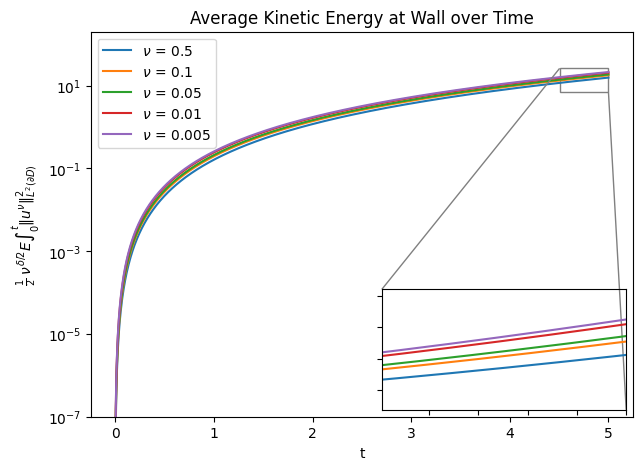}
         \caption{Kinetic Energy at the Wall}
         \label{fig: half space deterministic kinetic energy}
     \end{subfigure}
     \caption{The Total (Global) Viscous Dissipation above an infinite plate in a Deterministic System and Energy at the Wall}
     \label{fig: half space deterministic viscous dissipation}
\end{figure}

So far we have fixed $\delta$ and seen how the quantities of interest change with $\nu$. Moreover the choice of $\delta$ used is outside of the values we used in Theorem \ref{thm: exist anomalous dissipation}, however this is inconsequential as can be seen in Figure \ref{fig: half space dissipation stochastic and deterministic} both the stochastic and deterministic systems behave the same for each value of $\delta$ with only the total amount of viscous dissipation over time increases with the strength of the singularity at the wall. This suggests that Theorem \ref{thm: exist anomalous dissipation} should be able to be extended to all $\delta \in (0,1)$ but the validity of which remains open.

\begin{figure}
    \centering
    \begin{subfigure}[b]{0.45\textwidth}
         \centering
         \includegraphics[width=\textwidth]{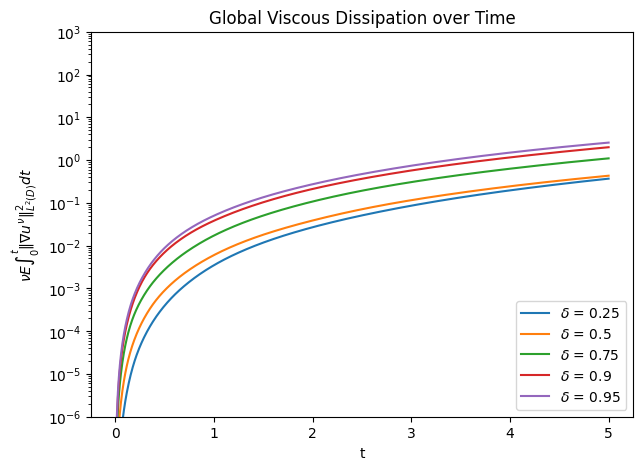}
         \caption{Average Dissipation in Stochastic System}
         \label{fig: half space stochastic dissipation delta}
     \end{subfigure}
     \hfill
     \begin{subfigure}[b]{0.45\textwidth}
         \centering
         \includegraphics[width=\textwidth]{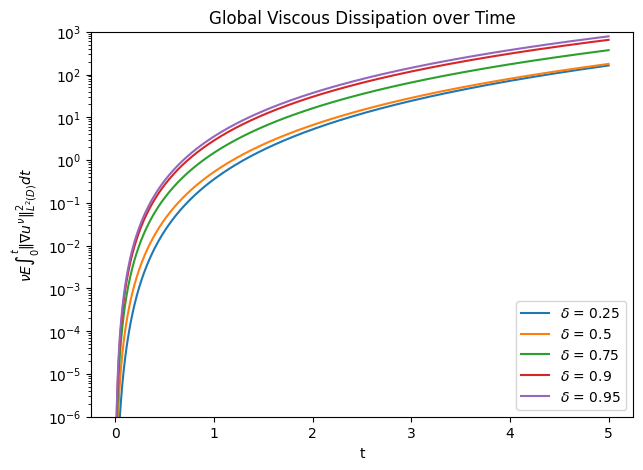}
         \caption{Dissipation in Deterministic System}
         \label{fig: half space deterministic dissipation delta}
     \end{subfigure}
     \caption{The Average Viscous Dissipation above an infinite plate in a Stochastic and a Deterministic System as $\delta \to 1^-$}
     \label{fig: half space dissipation stochastic and deterministic}
\end{figure}

Next we consider the same (Stokes) problem but over a sphere of radius $R = 5$ instead of in a flat semi-infinite domain. We do this first to confirm the results from Theorem \ref{thm: exist anomalous dissipation} as well as to check if the curvature / symmetry assumptions we used in the previous simulation are impacting the results. Note that in this setting the coloring on the noise is given by $g(r, \theta, \phi) = [0, 0, \frac{1}{(5-r)^{\delta/2}}]$. 

\begin{figure}
    \centering
    \begin{subfigure}[b]{0.45\textwidth}
         \centering
         \includegraphics[width=\textwidth]{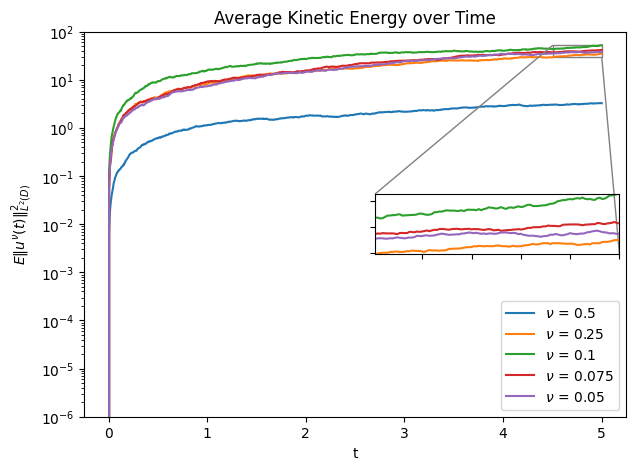}
         \caption{Average Kinetic Energy}
         \label{fig: sphere average kinetic energy}
    \end{subfigure}
    \hfill
    \begin{subfigure}[b]{0.45\textwidth}
         \centering
         \includegraphics[width=\textwidth]{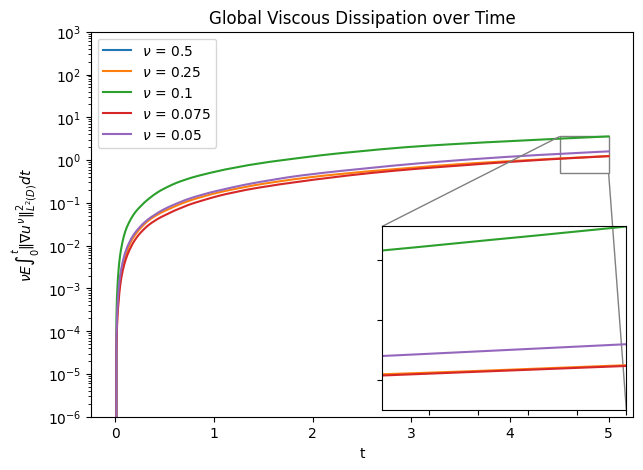}
         \caption{Average Viscous Dissipation}
         \label{fig: sphere average dissipation}
    \end{subfigure}
    \hfill
    \begin{subfigure}[b]{0.45\textwidth}
         \centering
         \includegraphics[width=\textwidth]{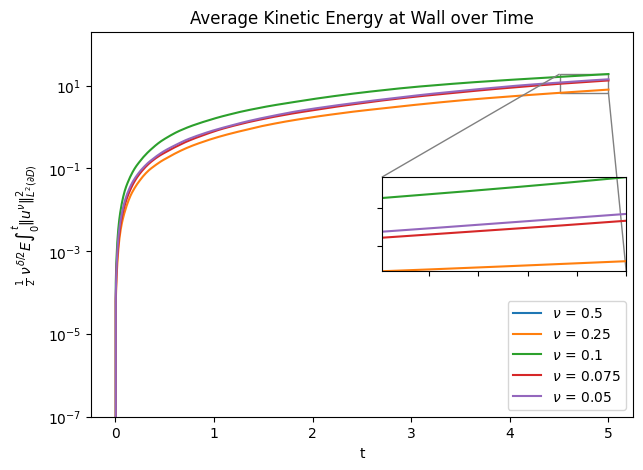}
         \caption{Average Kinetic Energy at the Wall}
         \label{fig: sphere average kinetic energy at the wall}
    \end{subfigure} 
    \caption{The Average Viscous Dissipation and Kinetic Energy over a Sphere of Radius 5}
    \label{fig: Average Dissipation over the Sphere}
\end{figure}

Once again we use a finite difference approach to compute the solution $z^\nu$ but now we take the mesh sizing in the $r, \theta, \phi$ directions to all be on the size of the Kolmogorov scale $\nu^{3/4}$. Since this mesh sizing is in all 3 dimensions, the cost of each time step has greatly increased compared to the halfplane case where the symmetry assumptions reduced the problem to 1 dimension. Moreover we impost the (implicit) assumption used throughout that at the origin $z^\nu$ is bounded. In practice this was done by saying that at the origin $z^\nu$ is equal to the average of $z^\nu$ over the smallest shell containing the origin. 

In order to compensate for the extra computational difficulties, in this case we restrict $\nu$ to a smaller range of values: $\nu = [0.5, 0.25, 0.1, 0.075, 0.05]$. Nevertheless, we still observe that once again the solution to the linear Stokes problem $z^\nu$ exhibits a blow up in the kinetic energy at the wall as well as satisfies the global anomalous dissipation assumption. See Figure \ref{fig: Average Dissipation over the Sphere}. However when comparing the spherical case in Figure \ref{fig: Average Dissipation over the Sphere} with the infinite plate case in \ref{fig: half space energy and dissipation} and \ref{fig: half space kinetic energy at wall} we see that the curvature of the domain causes the kinetic energy at the boundary to blow up much faster. Moreover, it causes the total amount of viscous dissipation and kinetic energy to increase. Most likely this is because in the bounded domain, any oscillations that do not dissipate out near the origin are eventually reflected back at the wall and the curvature makes the newly created oscillations which are moving into the interior to collide with one another leading to the waves amplifying one another.   


\section{Future Directions}
This work has shown that \eqref{eqn: anomalous dissipation} is not a good definition for how nonlinear affects contribute to anomalous dissipation even in the non-statistically stationary/ deterministic setting when the external forcing is taken to be only $L^2$. But it remains open whether requiring the external forcing to be in $L^\infty$ could fix this issue. Another area of interest is to consider the converse: what are sufficient conditions such that solutions to the Navier Stokes equations $u^\nu$ behave well-enough such that
\[
    \limsup_{\nu \to 0}\nu\E\int_0^t\|\grad u^\nu\|_{L^2(D)}^2 = 0.
\]
One possible approach is to examine the techniques used in \cite{drivas2018onsager} for the no-slip boundary condition and extend them to the slip boundary condition case where certain quantities may be easier to work with. Another interesting approach is that Bourgain, Brezis, and Mironescu showed that for Lipschitz domains $D$ and $h \in [H^1(D)]^3$
\[
    \lim_{s \to 1^-}(1-s)\int_D \int_D \frac{|h(x) - h(y)|^2}{|x-y|^{3+2s}}\;dxdy = \frac{2\pi}{3}\|\grad h\|_{L^2(D)}^2
\]
Hence if one can find a condition so that this point-wise limit in the $H^s$ semi-norms is uniform, then using interpolation theory
\[
    \limsup_{\nu \to 0^+}\nu\E\int_0^T\|\grad u^\nu\|_{L^2(D)}^2 = \lim_{s\to 1^-}\limsup_{\nu \to 0^+}(1-s)\nu\E\int_0^T\|u^\nu\|_{\dot{H}^s(D)}^2 = 0.
\]

\section{Acknowledgments}
The authors gratefully acknowledged the support by the National Science Foundation under the grants DMS 2231533 and DMS 2008568. 
Furthermore, the authors thank Dr. Huy Quang Nguyen for his many insightful comments in the drafting of this article.

\bibliographystyle{plain}
\bibliography{paper_references.bib}

\end{document}